\renewcommand{\epsilon}{\varepsilon}
\renewcommand{\setminus}{\smallsetminus}
\renewcommand{\emptyset}{\varnothing}
\newtheorem{theorem}{Theorem}[section]
\newtheorem{proposition}[theorem]{Proposition}
\newtheorem{corollary}[theorem]{Corollary}
\newtheorem{lemma}[theorem]{Lemma}
\newtheorem{question}[theorem]{Question}
\theoremstyle{definition}
\newtheorem{example}[theorem]{Example}
\newtheorem{definition}[theorem]{Definition}
\newtheorem{notation}[theorem]{Notation}
\theoremstyle{remark}
\newtheorem{remark}[theorem]{Remark}
\newcommand{\normal}{\lhd}
\newcommand{\Z}{\mathbb Z}
\newcommand{\E}{\mathbb E}
\newcommand{\R}{\mathbb R}
\newcommand{\F}{\rm F}
\newcommand{\FP}{\operatorname{FP}}
\newcommand{\cohom}[3]{H^{{\raise1pt\hbox{$\scriptstyle#1$}}}(#2\>\!,#3)}
\newcommand{\tatecohom}[3]%
  {\widehat H^{{\raise1pt\hbox{$\scriptstyle#1$}}}(#2\>\!,#3)}
\newcommand{\Cohom}[3]%
  {H^{{\raise1pt\hbox{$\scriptstyle#1$}}}\big(#2\>\!,#3\big)}
\newcommand{\Tatecohom}[3]%
  {\widehat H^{{\raise1pt\hbox{$\scriptstyle#1$}}}\big(#2\>\!,#3\big)}
\newcommand{\homol}[3]{H_{{\lower1pt\hbox{$\scriptstyle#1$}}}(#2\>\!,#3)}
\newcommand{\homolog}[2]{H_{{\lower1pt\hbox{$\scriptstyle#1$}}}(#2)}
\title[Normal fibre products]{Finite presentability of normal fibre products}
\author{Conchita Mart\'inez-P\'erez}
\address{Conchita Mart\'inez-P\'erez, Departamento de Matem\'aticas, Universidad de Zaragoza,
50009 Zaragoza, Spain. Phone: +34976761000 Ext 3243} \email{conmar@unizar.es} 
\date{\today} 
\keywords{Subdirect products, finite presentability, cohomological finiteness conditions}
\subjclass[2010]{20J05, }
\thanks{Supported by  Gobierno de Arag\'on, European Regional Development Funds and
MTM2010-19938-C03-03. Also supported by Gobierno de Arag\'on, subvenci\'on de fomento de la movilidad de los investigadores.}
\begin{document}

\begin{abstract} We use Bieri-Strebel invariants to determine when a normal fibre product in the product of two finitely presented groups is finitely presented.
We give conditions that imply and in some cases characterize the existence of such finitely presented fibre products.
 \end{abstract}

\maketitle

\section{Introduction}

\noindent 
It is well known that finitely presented subgroups of a product of two free groups are rare, in the sense that any such group must be a finite extension of a product of two free groups. This fact goes back to Baumslag and Roseblade, see \cite{BaumslagRoseblade}, and has been the starting point of a very fruitful field of research. A subdirect product of two groups $G_1$, $G_2$ is a subgroup $H\leq G_1\times G_2$ such that the restrictions to $H$ of the projection maps onto $G_1,G_2$ are epimorphisms. 
 Several authors, including  Bridson, Howie, Miller, Short and also Kochloukova have shown that for other families of groups such as surface or limit groups finitely presented subdirect products are rare in the same sense as before and have generalized this phenomena to other finiteness properties (see for example \cite{BHMS}, \cite{BHMS3}, \cite{Kochsubdirect} and  \cite{BHMS2}).

Let $H\leq G_1\times G_2$ a subdirect product and put $N_1=H\cap G_1$, $N_2=H\cap G_2$. Then for  certain isomorphism $\mu:G_1/N_1\to G_2/N_2$, $H$ is the fibre product 
\begin{equation}H=\{(g_1,g_2)\mid\mu(g_1N_2)=g_2N_2\}.\end{equation}

A natural question is

 \begin{question} \label{main} {\sl Which conditions on $N_1,N_2,\mu$ imply that $H$ is finitely presented?}\end{question}
 
  An answer in the case when $G_1=G_2$, $N_1=N_2$ is the 1-2-3 Theorem due to Baumslag, Bridson, Miller III and Short (\cite{BBMS}). The following result is 
 its asymmetric version proven by  Bridson,  Howie,  Miller III and Short:

\bigskip

\noindent{\bf Theorem} (\cite{BHMS2}, the asymmetric 1-2-3 Theorem) {\sl Let $H\leq G_1\times G_2$ be a fibre product and $N_1=H\cap G_1$, $N_2=H\cap G_2$. Assume that 
$G_1,G_2$ are finitely presented, that $G_1/N_1\cong G_2/N_2$ is of type $\mathrm{F}_3$ and that one of $N_1$, $N_2$ s finitely generated. Then $H$ is finitely presented.}

\bigskip

This could lead us to believe that the precise isomorphism $\mu$ is irrelevant to Question \ref{main}. But a second answer in the particular case when $G=G_1=G_2$ is metabelian and $N=N_1=N_2$ makes clear that this is not the case. (The untwisted $N$-fibre product is the fibre product associated to $1_d:G/N\to G/N$.)

\bigskip

\noindent{\bf Theorem} (\cite{BBHM} Theorems 1, 9) {\sl Let $G$ be a finitely presented metabelian group. For any $N\leq G$
\begin{itemize}
\item[i)] the untwisted $N$-fibre product of $G$ is finitely presented,

\item[ii)] if $G/N$ is abelian, the twisted $N$-fibre product $H_{-1_d}$ is finitely presented if and only if $N$ is finitely generated.
\end{itemize}
}

\bigskip

The relevance of $\mu$ is also clear by the main result of \cite{Groves} which essentially says that for metabelian groups the set of all possible maps $\mu$ is bigger in a strong sense than the set of those $\mu$
for which $H_{\mu}$ is finitely presented.

In this paper, we are going to consider Question \ref{main} but for normal fibre products only. The main reason for that is that our techniques rely on the use of the Bieri-Strebel invariant $\Sigma_1(G)$ of the finitely generated group $G$, which is a subset of the character sphere $S(G)=\{[\chi]=\R_+\mid\chi:G\to\R\text{ character}\}$ (for a normal subgroup $N\leq G$, $S(G,N)$ is the sub sphere of those characters vanishing at $N$). Obviously, the commutator subgroup $G'$ is in the kernel of every character and by \cite{BM} Proposition 1.2, a fibre product $H$ is normal precisely when it contains $G'$. 
Coming back to Question \ref{main} and taking into account what happens for metabelian groups, one could think that given its resemblance with the diagonal group, the untwisted fibre product should keep the properties of the ambient group $G$. However what we get is:

\bigskip

\noindent{\bf Theorem A: }{\sl Let $N\geq G'$ be a normal subgroup of the finitely presented group $G$. Then the untwisted $N$-fibre product in $G\times G$ is finitely presented if and only if 
$$S(G,N)\subseteq\Sigma^1(G)\cup-\Sigma^1(G).$$}

This implies: 

\bigskip

\noindent{\bf Corollary B: }{\sl Let $N\geq G'$ be a normal subgroup of the finitely presented group $G$. Assume that $G$ contains no non-abelian free subgroup. Then the untwisted $N$-fibre product in $G\times G$ is finitely presented.}

\bigskip

On the contrary, if we twist by $-1_d$  we rarely get a finitely presented fibre product. More explicitly, the $N$-fibre product $H_{-\text{id}}$ in $G\times G$ is finitely presented if and only if $N$ is finitely generated (see Corollary \ref{twisted-1} below). This and the previous results are corollaries of the following:

\bigskip

\noindent{\bf Theorem C: }{\sl Let $G_1'\leq N_1\normal G_1$, $G_2'\leq N_2\normal G_2$ be normal subgroups of the finitely presented groups $G_1,G_2$ and let $\mu:G_1/N_1\buildrel\sim\over\to G_2/N_2$ be an isomorphism. Then the fibre product $H_\mu\leq G_1\times G_2$ is finitely presented if and only if
$$[\mu^*(\Sigma^1(G_2)^c)]\cap S(G_2,N_2))\subseteq -\Sigma^1(G_1)\cap S(G_1,N_1).$$}

\bigskip

\noindent(See Section 3 for notation). Using Theorem D it is easy to construct examples for which the untwisted fibre product is not finitely presented but there is some $\mu$ such that $H_\mu$ is. 
In the last two Sections we consider the following variation of Question \ref{main}

\begin{question} Let $N_1,N_2$ be normal subgroups of the finitely presented groups $G_1,G_2$ such that $G_1/N_1\cong G_2/N_2$. When is it possible to construct $\mu$ so that $H_\mu$ is finitely presented? 
\end{question}

Again, we restrict ourselves to the case when $G_1/N_1$ and $G_2/N_2$ are abelian.
We prove: 

\bigskip

\noindent{\bf Theorem D: }{\sl Let $G_1,G_2$ be finitely presented, $G_1'\leq N_1\leq G_1$, $G_2'\leq N_2\leq G_2$ such that $G_1/N_1\cong G_2/N_2$. Assume that there are  $N_1\leq K_1\leq G_1$, $N_2\leq K_2\leq G_2$ both finitely generated and of co-rank $m$ and $k$ respectively such that 
$$k+m=\text{rk}G_1/N_1=\text{rk}G_2/N_2.$$
 Then there is some normal finitely presented fibre product $H$ in $G_1\times G_2$ with $H\cap G_i=N_i$ for $i=1,2.$}

\bigskip

As a corollary, we  show that the existence of finitely generated subgroups of big co-rank lying over the commutator implies the existence of finitely generated normal fibre products of big co-rank (see Corollary \ref{linear}).
We also determine the existence of such subgroups for certain families as for example virtually solvable groups of finite Pr\"ufer rank. 

But to characterize the existence of finitely presented normal fibre products is a much more difficult problem which we consider only in the case $G=G_1=G_2$ and $N=N_1=N_2$.  We are able to solve it assuming that $N$ has small co-rank in $G$, namely 1 (Proposition \ref{rank1}) or 2 (Theorem \ref{rank2}) and also for arbitrary co-rank under the extra assumption that $\Sigma_1(G)^c$ has an special form: it is a finite union of great subspheres (Theorem \ref{greatsph}).  This is the case for example if $G$ is a right angled Artin group, moreover the dimension of the subspheres depends on the connectivity of the associated flag complex $\Delta$ which allows a full characterization in terms of $\Delta$.

\bigskip

This article was written while I was spending a semester as a visitor at the City College of New York. I would like to thank Sean Cleary and all the people in the Department of Mathematics and in the New York Group Theory Cooperative for their hospitality. And also special thanks to Gilbert Baumslag for driving my attention to the kind of problems studied here and to the referee for a very detailed report which led to a substantial improvement in the paper and to the inclusion of new results and insights. 

\section{Preliminaries in fibre products, cohomological finiteness and Sigma theory}\label{two}

\begin{definition}\label{deftwisted} Let $G_1,G_2$ be groups with normal subgroups $N_1\leq G_1,$ $N_2\leq G_2$ such that $G_1/N_1$ and $G_2/N_2$ are isomorphic and let 
 $\mu:G_1/N_1\buildrel\sim\over\to G_2/N_2$ be an isomorphism. The {\sl $\mu$-twisted fibre product} in $G_1\times G_2$ is
$$H_\mu=\{(g_1,g_2)\mid \mu(g_1N_1)=g_2N_2\}.$$
In the particular case when $G:=G_1=G_2$ and $N:=N_1=N_2$ we call $H_\mu$ the {\sl $N$-fibre product}. And if $\mu=1_d$, then $H_{1_d}$ is called the {\sl untwisted $N$-fibre product} in $G\times G$.
\end{definition}

Obviously, fibre products in $G_1\times G_2$ are subdirect products. Conversely, given a subdirect product $H\leq G_1\times G_2$, let $N_i:=H\cap G_i$ for $i=1,2$ and note that each $N_i$ is normal in $G_i$. Moreover,  for any $g_1N$ there is a single class $g_2N$ such that $(g_1,g_2)\in H$. In fact, this defines an isomorphism $\mu:G_1/N_1\to G_2/N_2$ such that $H=H_\mu$.

\begin{definition} With the notation of Definition \ref{deftwisted}, recall that a subdirect product $H:=H_\mu\leq G_1\times G_2$ is normal if and only if $G_1'\times G_2'\leq H$. This is equivalent to say that $G_i'\leq N_i$ for $i=1,2$. In this case, 
$$n:=\text{rk}(G_1\times G_2)/H_\mu=\text{rk}G_1/N_1=\text{rk}G_2/N_2$$
where rk denotes the torsion fee rank of a finitely generated abelian group. 
We say that $H$ is a {\sl normal fibre product of co-rank $n$}. Obviously, the biggest possible co-rank of a normal fibre product in $G_1\times G_2$ is $$\text{min}\{\text{rk}G_1/G_1',\text{rk}G_2/G_2'\}.$$
\end{definition}

If $G_1$ and $G_2$ are both finitely presented and $H$ is a normal subdirect product in $G_1\times G_2$, then
\cite{BaumslagRoseblade} Lemma 2 implies that $H$ is finitely generated.

 As one can expect, the smaller the co-rank of a fibre product is, the closer properties to the ambient group one can get. This is formalized in the next result.

\begin{lemma}\label{Nreduction} Assume that there exists a finitely presented normal fibre product $H\leq G_1\times G_2$ of co-rank $n$. Then for any $0<m<n$ there is also a finitely presented normal fibre product $\hat H\leq G_1\times G_2$ of co-rank $m$.
\end{lemma}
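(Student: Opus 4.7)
The plan is to enlarge $H$ to a bigger normal fibre product of co-rank $m$ by modding out a suitable intermediate quotient of the abelian group $G_1/N_1 \iso G_2/N_2$ on both sides. Write $H = H_\mu$, where $N_i := H \cap G_i$ and $\mu \colon A \to B$ is an isomorphism between $A := G_1/N_1$ and $B := G_2/N_2$, both finitely generated abelian of torsion-free rank $n$ (since $H$ is a normal fibre product we have $G_i' \leq N_i$).

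Since $0 < m < n$, there exists a subgroup $C \leq A$ with $A/C \iso \Z^m$, obtained for instance by composing $A \epi A/\mathrm{Tors}(A) \iso \Z^n$ with a projection $\Z^n \epi \Z^m$. Let $\hat N_1 \leq G_1$ be the preimage of $C$ and $\hat N_2 \leq G_2$ the preimage of $\mu(C)$. Both $\hat N_i$ contain $G_i'$ and are normal in $G_i$ with $\text{rk}\,G_i/\hat N_i = m$. The isomorphism $\mu$ descends to an isomorphism $\bar\mu \colon G_1/\hat N_1 \to G_2/\hat N_2$, and I set $\hat H := H_{\bar\mu}$. Then $H \leq \hat H$ and $\hat H \supseteq G_1' \times G_2'$, so $\hat H$ is normal in $G_1 \times G_2$; it is subdirect because $H$ already is; and $\hat H \cap G_i = \hat N_i$ shows that $\hat H$ is a normal fibre product of co-rank $m$.

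It remains to verify that $\hat H$ is finitely presented. The natural map $G_1 \times G_2 \to B$ given by $(g_1, g_2) \mapsto \mu(g_1 N_1)^{-1} \cdot g_2 N_2$ has kernel $H$, so it identifies $(G_1 \times G_2)/H$ with the finitely generated abelian group $B$. Under this identification, $\hat H/H$ corresponds to the kernel of $B \epi B/\mu(C)$, namely $\mu(C) \iso C$. Hence $\hat H/H$ is finitely generated abelian, and the short exact sequence
\[
1 \lra H \lra \hat H \lra C \lra 1
\]
exhibits $\hat H$ as an extension of a finitely presented group by a finitely presented group, so $\hat H$ is itself finitely presented. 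The argument is essentially structural; the one point to keep an eye on is the identification of $\hat H/H$ with the finitely generated abelian group $C$, which is however forced by its sitting inside $(G_1 \times G_2)/H \iso B$.
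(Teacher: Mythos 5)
Your proof is correct and takes essentially the same route as the paper: both enlarge $H$ to a larger normal fibre product of co-rank $m$ whose quotient by $H$ is finitely generated abelian of rank $n-m$, and conclude via the extension $1\to H\to\hat H\to(\text{f.g.\ abelian})\to 1$. The paper writes this group as $H(M_1\times M_2)$ for copies $M_1,M_2$ of a rank-$(n-m)$ subgroup $T\leq H/(N_1\times N_2)$, which coincides with your $H_{\bar\mu}$.
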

\begin{proof} Let $H=H_\mu$ be a finitely presented normal subdirect product of co-rank $n$ with $\mu:G_1/N_1\buildrel\sim\over\to G_2/N_2$ for
$N_i=H\cap G_i$ for $i=1,2$. Let $T\leq H/(N_1\times N_2)$ be a subgroup of rank $n-m$ and put
$$M_1:=\{(g_1,1)\mid (g_1N_1,\mu(g_1N_1))\in T\},$$
$$M_2:=\{(1,g_2)\mid (\mu^{-1}(g_2N_2),g_2N_2)\in T\},$$
$$\hat H:=H(M_1\times M_2).$$
Obviously, $\hat H$ is a normal subdirect product of co-rank $m$. Moreover,  there is a short exact sequence
$$1\to H\to\hat H\to (M_1\times M_2)/T\to 1$$
which implies that $\hat H$ is finitely presented.
\end{proof}

The main tool that we are going to use is Sigma theory so we recall here the main definitions needed. A character of a group $G$ is a homomorphism $\chi:G\to\R$ where $\R$ is seen as an additive group, the set of characters is denoted by $\text{Hom}(G,\R)$. Given a character $\chi$ we put $[\chi]=\{t\chi\mid 0<t\in\R\}$ and extend this notation to subsets, i.e., for any $\Omega\subseteq\text{Hom}(G,\R)$, $[\Omega]:=\{[\chi]\mid\chi\in\Omega\}$. Let
$$S(G):=\{[\chi]\mid 0\neq\chi:G\to\R\}$$
which is often useful to visualize as an $n-1$-sphere where $n=\text{rk}G/G'$. If $N\leq G$ is a subgroup, we set $S(G,N):=\{[\chi]\in S(G)\mid \chi|_N=0\}$. For $G'\leq N$ normal in $G$, we denote by
$\sqrt N/N$ the torsion subgroup of $G/N$.
Note that $S(G,N)=S(G,\sqrt N)$. 
Given $[\chi]\in S(G)$, consider the monoid $G_\chi:=\{g\in G\mid \chi(g)\geq 0\}$. Let $m\geq 0$ or $m=\infty$.
Recall that $G$ is said to be of type $\FP_m$ for $m\leq 0$ or $m=\infty$ if there is a resolution of the trivial module by projective modules which are finitely generated up to the $m$-th one. And it is of type $\F_m$ if it admits a model for the Eilenberg-Maclane space $K(G,1)$ with finite $m$-skeleton. Being finitely generated is equivalent to being $\F_1$ or $\FP_1$ and  being finitely presented is equivalent to being of type $\F_2$ so the properties $\F_m$ are usually considered as homotopical higher dimensional analogues of finite presentability.
 If $G$ is of type $\FP_m$, the homological $m$-th Bieri-Neumann-Renz-Strebel invariant (or $m$-th Sigma invariant for short), first defined in \cite{BieriRenz} is

$$\Sigma^m(G,\Z):=\{[\chi]:G_\chi\text{ is of type }\FP_m\}$$
for the obvious generalization for monoids of the condition of being $\FP_\infty$.

 If $G$ is of type $\F_m$, then one can define the homotopical analog $\Sigma^m(G)$ (first defined in \cite{Renz}).
 Some of the most remarkable features of these invariants are that they are open subsets of $S(G)$ (\cite{BieriRenz} Theorem A and 6.5) and that they provide information about which subgroups over $G'$ are also of type $\FP_m$ (resp. $\F_m$).

\begin{theorem}\label{bierirenz}(\cite{BieriRenz} Theorem B and 6.5) Let $G$ be of type $\FP_m$ (resp. $\F_m$) and $G'\leq N\leq G$. Then $N$ is of type $\FP_m$ ($\F_m$) if and only if
$$S(G,N)\subseteq\Sigma^m(G,\Z).$$
\end{theorem}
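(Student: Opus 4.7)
I outline the homological statement; the homotopical version runs along the same template. For the ``only if'' direction, fix $[\chi]\in S(G,N)$. Since $\chi|_N=0$, $\chi$ factors as $\bar\chi\circ\pi$ through the projection $\pi\colon G\to G/N$, and hence $G_\chi=\pi^{-1}\bigl((G/N)_{\bar\chi}\bigr)$, giving a short exact sequence of monoids
\[
1\to N\to G_\chi\to (G/N)_{\bar\chi}\to 1.
\]
The quotient monoid $(G/N)_{\bar\chi}$, being a submonoid of the finitely generated abelian group $G/N$, is itself finitely generated and of type $\FP_\infty$. A Lyndon--Hochschild--Serre style argument (adapted to monoids, or done concretely by tensoring partial resolutions) then combines a $\Z N$-resolution of $\Z$ that is finitely generated up to degree $m$ with a $\Z(G/N)_{\bar\chi}$-resolution of $\Z$ of the same kind to produce a $\Z G_\chi$-resolution of $\Z$ finitely generated up to degree $m$; thus $G_\chi$ is of type $\FP_m$ and $[\chi]\in\Sigma^m(G,\Z)$.

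The ``if'' direction is the deeper one. Since $G'\leq N$ and $G$ is finitely generated, $G/N$ is finitely generated abelian, so $S(G,N)$ is a great subsphere of $S(G)$ and in particular compact. The starting point is the characterization built into the definition of $\Sigma^m$: $[\chi]\in\Sigma^m(G,\Z)$ precisely when the trivial $\Z G$-module $\Z$ admits a projective $\Z G$-resolution whose first $m+1$ terms are finitely generated as $\Z G_\chi$-modules. The openness of $\Sigma^m(G,\Z)$ together with compactness of $S(G,N)$ lets one pick finitely many characters $[\chi_1],\dots,[\chi_r]\in S(G,N)$ whose ``good'' partial resolutions collectively control every direction in $S(G,N)$. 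A Mayer--Vietoris style splicing then produces a single partial projective $\Z G$-resolution of $\Z$ which, after base change along $\Z N\hookrightarrow\Z G$, is finitely generated up to degree $m$. Equivalently, $\Z[G/N]$ is of type $\FP_m$ as a $\Z G$-module, which by definition means $N$ itself is of type $\FP_m$. The whole argument proceeds by induction on $m$, the case $m=0$ being vacuous.

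The main obstacle sits in this glueing step. Each $[\chi_i]$ certifies finite generation of a resolution only against the single monoid ring $\Z G_{\chi_i}$; turning this into finite generation over $\Z N$ requires splicing the local resolutions so that the control persists along \emph{every} direction of $S(G,N)$ simultaneously. This forces one to combine the openness of the Sigma invariant (promoting control at $\chi_i$ to a whole neighbourhood) with a Dunwoody / Bieri--Strebel style subdivision of the character sphere into finitely many half-space regions, and it is this subdivision-and-splicing which constitutes the principal technical contribution of Bieri--Renz.
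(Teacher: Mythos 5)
This statement is not proved in the paper at all: it is quoted from Bieri--Renz (Theorem B and Section 6.5) and used as a black box, so there is no in-paper argument to compare yours with; what follows assesses your sketch on its own terms. Your ``only if'' direction contains a genuine error: the monoid $(G/N)_{\bar\chi}$ is \emph{not} in general finitely generated. For $G/N\cong\Z^2$ and an irrational character such as $\bar\chi(a,b)=a+\sqrt{2}\,b$, the lattice points in the half-plane $\bar\chi\geq 0$ do not form a finitely generated monoid, so the step ``submonoid of a finitely generated abelian group, hence finitely generated, hence $\FP_\infty$'' fails. What the argument actually needs is that $\Z$ is of type $\FP_\infty$ over the monoid ring $\Z[(G/N)_{\bar\chi}]$ for an \emph{arbitrary} character of a finitely generated abelian group; this is true, but it is essentially the abelian case of the very theorem being proved (the statement $\Sigma^m(Q,\Z)=S(Q)$ for $Q$ finitely generated abelian) and requires its own argument, not a finite-generation shortcut. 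Granted that input, the extension argument for $1\to N\to G_\chi\to (G/N)_{\bar\chi}\to 1$ (using that $\Z G_\chi$ is free as a $\Z N$-module) is reasonable.

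The ``if'' direction, which is where the substance lies, is not a proof as written. Two concrete problems. First, ``after base change along $\Z N\hookrightarrow\Z G$ the resolution is finitely generated up to degree $m$'' runs the wrong way: induction from $\Z N$ to $\Z G$ preserves finite generation, while restriction from $\Z G$ to $\Z N$ destroys it, so nothing about $N$ is obtained this way. The correct reduction is the standard equivalence that $N$ is of type $\FP_m$ if and only if the $\Z G$-module $\Z[G/N]$ is of type $\FP_m$, which you state but never connect to your construction. Second, ``openness plus compactness plus Mayer--Vietoris style splicing'' only gestures at the hard step: in Bieri--Renz the mechanism is the machinery of valuations extending $\chi$ on a single fixed free resolution of $\Z$ over $\Z G$, together with a uniformity argument over the compact sphere $S(G,N)$, and it is exactly this that converts the hypothesis $S(G,N)\subseteq\Sigma^m(G,\Z)$ into finite generation of the syzygies of $\Z[G/N]$; no splicing of separate resolutions, one per $\chi_i$, is performed, and it is unclear how such a splicing would control all directions simultaneously. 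Finally, the homotopical case ($\F_m$, where the relevant invariant is $\Sigma^m(G)$ rather than $\Sigma^m(G,\Z)$) is not ``the same template'': it needs Renz's separate CW-complex/Morse-theoretic arguments.
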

\noindent (This result will be heavily used throughout the paper.)

Given two groups $G_1$ and $G_2$,  a homomorphism $\chi:G_1\times G_2\to\R$ is given by a pair $(\chi_1,\chi_2)$ with $\chi_i : G_i \to \R$. Therefore we may identify $S(G_1\times G_2)$ with $S(G_1)\ast S(G_2)$ where, for $A\subseteq S(G_1)$ and $B\subseteq S(G_2)$, 
$$A\ast B:=(A\times B)\cup A\cup B.$$
 Assume that both $G_1$ and $G_2$ are of type $\FP_m$ or $\F_m$. The following formulas are known as Meiner's inequalities (\cite{BieriGeoghegan} Theorem 1.2)
$$\Sigma^m(G_1\times G_2,\Z)^c\subseteq\bigcup_{i=0}^m\Sigma^ i(G_1,\Z)^c\ast\Sigma^{m-i}(G_2,\Z)^ c,$$
$$\Sigma^m(G_1\times G_2)^c\subseteq\bigcup_{i=0}^m\Sigma^ i(G_1)^c\ast\Sigma^{m-i}(G_2)^ c.$$
The homotopical and homological invariants are connected via
$$\Sigma^m(G_1\times G_2)=\Big(\Sigma^m(G_1\times G_2,\Z)\setminus(S(G_1)\cup S(G_2))\Big)\cup\Sigma^m(G_1)\cup\Sigma^ m(G_2)$$
(this is \cite{BieriGeoghegan} Theorem 1.1).

By \cite{BieriGeoghegan} Theorem 1.5 (first proven by Sch\"utz), Meiner's inequalities are equalities if $m\leq 3$. Since we will be mostly interested in 
$m=2$, we will make more explicit the above formulas in this case.
Assuming that $G_1,G_2$ are finitely presented, we get

\begin{equation}\label{homotopical2}
[(\chi_1, \chi_2)] \in\Sigma^2(G_1\times G_2)\iff\Bigg\{
\begin{aligned}
&[\chi_1] \in \Sigma^2(G_1)\text{ if }\chi_2 = 0,\\
 &[\chi_2] \in \Sigma^2(G_2)\text{ if }\chi_1 = 0,\\
&[\chi_1] \in \Sigma^1(G_1)\text{ or }[\chi_2] \in \Sigma^1(G_2)\text{ otherwise.}\end{aligned}
\end{equation}
 There is a similar formula for $\Sigma^2(\Gamma,\Z)$.

Although we have tried to provide all the relevant references for the results used, for anything related to the invariants $\Sigma^1(G,\Z),$ $\Sigma^1(G)$ the reader is referred to the excellent survey \cite{Strebelnotes}.

\section{Finitely presentability of normal fibre products}\label{three}

\begin{notation}\label{mu*}  Let $G_i'\leq N_i$ be a normal subgroup of $G_i$ for $i=1,2$ such that there is $\mu:G_1/N_1\buildrel\sim\over\to G_2/N_2$ isomorphism. There is a dual map $\mu^*:\text{Hom}(G_2/N_2,\R)\to\text{Hom}(G_1/N_1,\R)$. Then
$$[\mu^*(S(G_2,N_2))]\subseteq S(G_1,N_1).$$
Observe that the map $\mu^*$ does not depend on the action of $\mu$ on the torsion elements of $G_1/N_1$.

\end{notation}

\begin{theorem}\label{twistedfinpres} Let $G_1'\leq N_1\normal G_1$, $G_2'\leq N_2\normal G_2$ be normal subgroups of the finitely presented groups $G_1,G_2$ and let $\mu:G_1/N_1\buildrel\sim\over\to G_2/N_2$ be an isomorphism. Then the fibre product $H_\mu\leq G_1\times G_2$ is finitely presented if and only if
\begin{equation}\label{cond}[\mu^*(\Sigma^1(G_2)^c)]\cap S(G_2,N_2))\subseteq -\Sigma^1(G_1)\cap S(G_1,N_1)\end{equation}
\end{theorem}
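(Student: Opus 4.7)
The plan is to apply the Bieri--Renz criterion (Theorem \ref{bierirenz}) to the inclusion $H_\mu\leq G_1\times G_2$. Observe that $H_\mu$ contains $N_1\times N_2$, which in turn contains $G_1'\times G_2'=(G_1\times G_2)'$, so Theorem \ref{bierirenz} applies to $H_\mu$ as a subgroup of the finitely presented group $G_1\times G_2$ and yields that $H_\mu$ is of type $\F_2$ if and only if $S(G_1\times G_2,H_\mu)\subseteq \Sigma^2(G_1\times G_2)$. The problem thus reduces to comparing these two subsets of the join $S(G_1)\ast S(G_2)$.

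The first step is to describe $S(G_1\times G_2,H_\mu)$ explicitly. A non-zero character $(\chi_1,\chi_2)$ of $G_1\times G_2$ vanishes on $H_\mu$ exactly when $\chi_1|_{N_1}=0$ and $\chi_2|_{N_2}=0$ (because $N_1\times 1$ and $1\times N_2$ lie in $H_\mu$) together with $\chi_1(g_1)+\chi_2(g_2)=0$ for every $(g_1,g_2)\in H_\mu$. Once $\chi_i$ is known to vanish on $N_i$, the latter condition, read via $\mu(g_1N_1)=g_2N_2$, translates into $\chi_1=-\mu^*(\chi_2)$ at the level of characters on the quotients (identified with $S(G_i,N_i)$ as in Notation \ref{mu*}). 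Since $\mu^*$ is a linear isomorphism, $\chi_1$ and $\chi_2$ vanish simultaneously, so
\[
S(G_1\times G_2,H_\mu)=\{[(-\mu^*(\chi_2),\chi_2)] : [\chi_2]\in S(G_2,N_2)\}.
\]

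Next I would feed each such class into formula (\ref{homotopical2}), which is Meinert's equality in the homotopical range $m=2$. Since both coordinates are non-zero, only the third clause is relevant and gives
\[
[(-\mu^*(\chi_2),\chi_2)]\in\Sigma^2(G_1\times G_2)\iff [-\mu^*(\chi_2)]\in\Sigma^1(G_1)\ \text{or}\ [\chi_2]\in\Sigma^1(G_2).
\]
Equivalently, the inclusion $S(G_1\times G_2,H_\mu)\subseteq\Sigma^2(G_1\times G_2)$ holds precisely when every $[\chi_2]\in S(G_2,N_2)\cap\Sigma^1(G_2)^c$ is mapped by $\mu^*$ into $-\Sigma^1(G_1)$; the extra intersection with $S(G_1,N_1)$ on the right of (\ref{cond}) is automatic because $\mu^*$ already lands in $S(G_1,N_1)$. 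Combining this with the previous paragraph yields the theorem.

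I do not anticipate a serious obstacle: the argument is essentially a bookkeeping exercise combining Theorem \ref{bierirenz} with Meinert's equality for $\Sigma^2$ of a direct product. The only delicate point is the identification of those characters of $G_1\times G_2$ that vanish on $H_\mu$ and the compatibility of the sign conventions used in $\mu^*$, so as to make sure the correspondence $[\chi_2]\mapsto[(-\mu^*\chi_2,\chi_2)]$ really is a bijection onto $S(G_1\times G_2,H_\mu)$.
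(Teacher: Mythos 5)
Your argument is correct and follows essentially the same route as the paper: reduce via Theorem \ref{bierirenz} to the inclusion $S(G_1\times G_2,H_\mu)\subseteq\Sigma^2(G_1\times G_2)$, identify the characters vanishing on $H_\mu$ as the pairs $[(-\mu^*(\chi_2),\chi_2)]$ with $[\chi_2]\in S(G_2,N_2)$ (the paper gets this from the surjectivity of the projection $H_\mu\to G_1$, exactly as you do), and then apply the $m=2$ product formula (\ref{homotopical2}). Your remarks that $N_1\times 1,\,1\times N_2\leq H_\mu$ force both coordinates to vanish on the $N_i$, and that the intersection with $S(G_1,N_1)$ on the right of (\ref{cond}) is automatic, are accurate bookkeeping points consistent with the paper's proof.
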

\begin{proof} As $(G_1\times G_2)'\leq H_\mu$, Theorem \ref{bierirenz} implies that $H_\mu$ is finitely presented if and only if $S(G_1\times G_2,H_\mu)\subseteq\Sigma^2(G_1\times G_2)$. Note that $S(G_1\times G_2,H_\mu)\subseteq S(G_1,N_1)\ast S(G_2,N_2)$. Let $[\chi]=[(\chi_1,\chi_2)]\in S(G_1,N_1)\ast S(G_2,N_2)$ and let $(g_1,g_2)\in H_{\mu}$. Then $g_2N_2=\mu(g_1N_1)$ thus
$$\begin{aligned}\chi(g_1,g_2)=\chi_1(g_1N_1)+\chi_2(\mu(g_1N_1))=(\chi_1+\mu^*(\chi_2))(g_1N_1).\end{aligned}$$
As the projection map $H_\mu\to G_1$ is surjective, we deduce that $H_\mu\leq\text{Ker}\chi$ if and only if $\chi_1+\mu^*(\chi_2)=0$.
This means that $H_\mu$ is finitely presented if and only if for any pair $[(\chi_1,\chi_2)]\in S(G_1,N_1)\ast S(G_2,N_2)$ such that $\chi_1+\mu^*(\chi_2)=0$, we have $[(\chi_1,\chi_2)]\in\Sigma^2(G_1\times G_2)$.
By (\ref{homotopical2}), $[(\chi_1,\chi_2)]\in\Sigma^2(G_1\times G_2)$ if either $[\chi_1]\in\Sigma^1(G_1)$ or $[\chi_2]\in\Sigma^1(G_2)$. Thus $H_\mu$ is finitely presented if and only if for any pair $[(\chi_1,\chi_2)]\in S(G_1,N_1)\ast S(G_2,N_2)$ such that $\chi_1+\chi_2=0$, we have either $[\chi_1]\in\Sigma^1(G_1)$ or $[\chi_2]\in[\mu^*(\Sigma^1(G_2))]$.
\end{proof}

\begin{remark}\label{sigmafibre} A statement close to this one for metabelian groups and in the case when $N_i=G_i'$ for $i=1,2$ is in the proof of \cite{Groves} Theorem B. In the case of metabelian groups, it also a consequence of \cite{BieriNeumannStrebel} Corollary 7.4 where the Sigma invariant $\Sigma^1(H)^c$ for $H$ a normal fibre product in $G_1\times G_2$ is computed.  
\end{remark}

\begin{example}\label{all} With the same notation as in Theorem \ref{twistedfinpres}, assume that $N_1$ is finitely generated. Then Theorem \ref{bierirenz} implies that $\Sigma^1(G_1)^c\cap S(G_1,N_1)=\emptyset$ thus any normal fibre product in $G_1\times G_2$ is finitely presented. This is a particular case of the asymmetric 1-2-3 Theorem (\cite{BHMS2}). \end{example}

The situation of Example \ref{all} is an extreme case where the hypothesis of Theorem \ref{twistedfinpres} hold true. The opposite extreme case is when one or both sub spheres are empty.

\begin{corollary}\label{empty} Let $G_1,G_2$ be finitely presented and assume that $\Sigma^1(G_1)=\emptyset$. 
A normal fibre product $H\leq G_1\times G_2$ is finitely presented if and only if $H\cap G_2$ is finitely generated. If moreover $\Sigma^1(G_2)=\emptyset$, then $H$ is finitely presented if and only if it has finite index $G_1\times G_2$.
\end{corollary}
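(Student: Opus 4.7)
My plan is to derive the corollary as an immediate application of Theorem \ref{twistedfinpres}; no substantial obstacle is expected. Write $H=H_\mu$, where $N_i:=H\cap G_i$ and $\mu:G_1/N_1\to G_2/N_2$ is the induced isomorphism. Normality of $H$ gives $G_i'\leq N_i$, so Theorem \ref{twistedfinpres} applies. The criterion extracted from its proof reads: $H_\mu$ is finitely presented if and only if for every pair $[(\chi_1,\chi_2)]\in S(G_1,N_1)\ast S(G_2,N_2)$ satisfying $\chi_1+\mu^*(\chi_2)=0$, either $[\chi_1]\in\Sigma^1(G_1)$ or $[\chi_2]\in\Sigma^1(G_2)$. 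Since $\mu^*$ is a linear isomorphism between the two character groups, the relation $\chi_1=-\mu^*(\chi_2)$ forces $\chi_1$ and $\chi_2$ to vanish simultaneously, so the only pairs that need to be tested have both coordinates nonzero and are parametrised freely by $\chi_2\in S(G_2,N_2)$.

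Under the hypothesis $\Sigma^1(G_1)=\emptyset$ the first alternative in the criterion is never available, so the condition collapses to $S(G_2,N_2)\subseteq\Sigma^1(G_2)$. By Theorem \ref{bierirenz} applied with $m=1$ this is exactly the statement that $N_2=H\cap G_2$ is finitely generated, which settles the first equivalence.

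For the second assertion, add $\Sigma^1(G_2)=\emptyset$. Then $N_2$ being finitely generated forces $S(G_2,N_2)=\emptyset$, i.e.\ $G_2/N_2$ admits no non-zero real character. As $G_2/N_2$ is a finitely generated abelian group ($G_2$ being finitely presented and $G_2'\leq N_2$), it must be finite. The map $(g_1,g_2)\mapsto g_2N_2\cdot\mu(g_1N_1)^{-1}$ is a surjective homomorphism $G_1\times G_2\twoheadrightarrow G_2/N_2$ with kernel exactly $H$, so $[G_1\times G_2:H]=|G_2/N_2|<\infty$. Conversely, a finite-index subgroup of a finitely presented group is finitely presented, giving the reverse implication directly. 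The only mildly delicate point in the whole argument is ruling out degenerate character pairs $(\chi_1,0)$ and $(0,\chi_2)$ in the join $S(G_1,N_1)\ast S(G_2,N_2)$, which is handled by injectivity of $\mu^*$ on $S(G_2,N_2)$.
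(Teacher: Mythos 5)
Your argument is correct and follows the paper's own route: the paper likewise deduces the corollary directly from Theorem \ref{twistedfinpres} (whose criterion you correctly restate, noting that characters vanishing on $H_\mu$ have both coordinates zero or both nonzero) together with Theorem \ref{bierirenz} for $m=1$, plus the standard facts that $S(G_2,N_2)=\emptyset$ forces $G_2/N_2$ finite and that finite-index subgroups of finitely presented groups are finitely presented. No gaps; you have simply written out the details the paper leaves implicit.
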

\begin{proof} This is a consequence of Theorem \ref{bierirenz}  and Theorem \ref{twistedfinpres}.
\end{proof}

\begin{example}\label{sigma1empty} If $G$ is a limit group, then $\Sigma^1(G)=\emptyset$ (\cite{Kochsubdirect}) so Corollary \ref{empty} applies (this is also a consequence of \cite{BHMS} Theorem A).
\end{example}

\begin{example}\label{sigma1empty2}  Let $G$ be a group that admits a finite presentation with $k\geq 2$ generators and $n$ relators. Assume that $n<k-1$. Then by \cite{Strebelnotes} Proposition B3.2, $\Sigma^1(G)=\emptyset$ .
 In particular this is the case for any  1-relator group $G$ unless it is cyclic or 2-generated. This is related to \cite{BHMS3} Theorem A.
\end{example}

In the case of co-rank one, there is not much choice for $\mu$. Using Theorem \ref{twistedfinpres} we get the following variant of a result in a previous version of the paper. This reformulation was suggested by the referee and improves on \cite{BM} Theorem 6.1:

\begin{proposition}\label{rank1part} Let  $G_1=N_1\rtimes \langle t_1\rangle$ and  $G_2=N_2\rtimes \langle t_2\rangle$ be finitely presented. Let $H_\mu$ the fibre product in $G_1\times G_2$ associated to the map $t_1\mapsto t_2$. Then $H_\mu$ is finitely presented if and only if 
\begin{itemize}
\item[i)] either $N_1$ is finitely generated, or $N_2$ is finitely generated, 

\item[ii)] or $G_1$ and $G_2$ are strict ascending extensions with finitely generated base groups $B_1,B_2$ so that either $t_iB_it_i^{-1}\subseteq B_i$ for both $i=1,2$ or $t_iB_it_i^{-1}\supseteq B_i$.
\end{itemize}
\end{proposition}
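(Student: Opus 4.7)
The plan is to apply Theorem \ref{twistedfinpres} and then unwind its condition in this very constrained setting. Since $G_i/N_i\cong\Z$, each subsphere $S(G_i,N_i)$ consists of exactly two antipodal points $[\chi_i]$ and $[-\chi_i]$, where $\chi_i$ is the character with $\chi_i(t_i)=1$ and $\chi_i|_{N_i}=0$. The isomorphism $\mu$ sends $t_1N_1\mapsto t_2N_2$, so $\mu^*(\chi_2)=\chi_1$, and the identity $\chi_1+\mu^*(\chi_2)=0$ used in the proof of Theorem \ref{twistedfinpres} has, up to positive rescaling, only the two solutions $(\chi_1,-\chi_2)$ and $(-\chi_1,\chi_2)$. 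Thus $H_\mu$ will be finitely presented if and only if both of the following disjunctions hold: \textbf{(I)} $[\chi_1]\in\Sigma^1(G_1)$ or $[-\chi_2]\in\Sigma^1(G_2)$; and \textbf{(II)} $[-\chi_1]\in\Sigma^1(G_1)$ or $[\chi_2]\in\Sigma^1(G_2)$.

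If $N_1$ or $N_2$ is finitely generated, Theorem \ref{bierirenz} applied to the corresponding $G_i$ gives $\{[\pm\chi_i]\}\subseteq\Sigma^1(G_i)$, so both (I) and (II) hold trivially; this will account for case (i). For the remaining analysis I will invoke the standard Bieri--Strebel dictionary for split extensions (see \cite{Strebelnotes}): for a finitely generated $G=N\rtimes\langle t\rangle$ and $\chi$ the character with $\chi(t)=1$, $\chi|_N=0$, the point $[\chi]$ lies in $\Sigma^1(G)$ if and only if there is a finitely generated $B\leq N$ with $tBt^{-1}\subseteq B$ and $N=\bigcup_{k\geq 0}t^{-k}Bt^k$; symmetrically, $[-\chi]\in\Sigma^1(G)$ corresponds to such a $B$ with $tBt^{-1}\supseteq B$ and $N=\bigcup_{k\geq 0}t^kBt^{-k}$. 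In particular $N$ is finitely generated iff both $[\pm\chi]$ lie in $\Sigma^1(G)$, and when $N$ is not finitely generated the strictness of the HNN inclusion is automatic.

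I will then assume that neither $N_1$ nor $N_2$ is finitely generated, so in each $G_i$ at most one of $[\chi_i]$, $[-\chi_i]$ belongs to $\Sigma^1(G_i)$, and will distinguish three cases according to $G_1$. If $[\chi_1]\in\Sigma^1(G_1)$, then (I) is automatic and (II) will force $[\chi_2]\in\Sigma^1(G_2)$; by the dictionary this produces $t_iB_it_i^{-1}\subsetneq B_i$ on both factors, which is one half of (ii). If $[-\chi_1]\in\Sigma^1(G_1)$, the symmetric argument forces $[-\chi_2]\in\Sigma^1(G_2)$ and yields the ``$\supseteq$'' alternative of (ii). If neither $[\chi_1]$ nor $[-\chi_1]$ lies in $\Sigma^1(G_1)$, then (I) will force $[-\chi_2]\in\Sigma^1(G_2)$ and (II) will force $[\chi_2]\in\Sigma^1(G_2)$, making $N_2$ finitely generated, which contradicts the standing assumption. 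Conversely, either alternative of (ii) immediately supplies one member of each of (I) and (II), so the equivalence closes.

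The main obstacle will be keeping the signs consistent in the Bieri--Strebel dictionary, so that the two alternatives of (ii) -- $t_iB_it_i^{-1}\subseteq B_i$ for both $i$ versus $t_iB_it_i^{-1}\supseteq B_i$ for both $i$ -- correspond respectively to the two cases in which the \emph{same} sign of character lies in $\Sigma^1(G_i)$ for both $i=1,2$; apart from this, only the brief case check above is needed.
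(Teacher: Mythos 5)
Your argument is correct and follows essentially the same route as the paper: reduce via Theorem \ref{twistedfinpres} to the two antipodal character pairs in $S(G_1,N_1)\ast S(G_2,N_2)$, dispose of case (i) with Theorem \ref{bierirenz}, and translate the remaining ``same sign of $[\pm\chi_i]$ in $\Sigma^1(G_i)$ for both $i$'' condition into (ii) via the HNN dictionary of \cite{Strebelnotes} Theorem A3.4, exactly as the paper does. One caveat: the orientation in the dictionary as you state it is reversed (for $BS(1,2)=\langle a,t\mid tat^{-1}=a^2\rangle$ with $B=\langle a\rangle$ one has $tBt^{-1}\subseteq B$ yet $[-\chi]\in\Sigma^1$ and $[\chi]\notin\Sigma^1$ for $\chi(t)=1$), but since alternative (ii) admits both orientations symmetrically and you apply the dictionary consistently to both factors, this does not affect the equivalence.
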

\begin{proof} Note that $S(G_i,N_i)=\{[\chi_i], [-\chi_i]\}$ where $\chi_i:G_i\to\R$ maps $t_i$ to 1, $N_i$ to 0. Observe that $[\mu^*(\chi_2)]=[\chi_1]$.
By Theorem \ref{twistedfinpres}, $H_\mu$ is finitely presented if and only if (\ref{cond}) holds.

This is obviously true if either $S(G_1,N_1)\subseteq\Sigma^1(G_1)$ or $S(G_2,N_2)\subseteq\Sigma^1(G_2)$, which is equivalent to i). And if i) does not hold, then the only possibility is that either $\Sigma^1(G_i)^c=\{[\chi_i]\}$ for both $i=1,2$ or $\Sigma^1(G_i)^c=\{[-\chi_i]\}$ for both $i=1,2$. By  \cite{Strebelnotes} Theorem A3.4  this is equivalent to ii).
\end{proof}

We also deduce the following:

\begin{proposition}\label{rank1} Let $G_1,G_2$ be finitely presented groups. Then there is some finitely 
presented normal fibre product of co-rank 1 in $G_1\times G_2$ if and only if one of the following conditions occurs:
\begin{itemize}
\item[i)] There exists some $G'_1\leq N_1< G_1$ finitely generated,

\item[ii)] There exists some $G'_2\leq N_2< G_2$ finitely generated,

\item[iii)] $G_1$ and $G_2$ are both  strict ascending HNN-extensions with finitely generated base groups.
\end{itemize}
\end{proposition}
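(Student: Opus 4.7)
The plan is to reduce this statement to Proposition \ref{rank1part} via a saturation trick. Whenever $G'\leq N\normal G$ has $G/N$ of torsion-free rank one, let $\widehat N$ denote the preimage in $G$ of the torsion subgroup of $G/N$, so that $G/\widehat N\cong\Z$ and $[\widehat N:N]<\infty$. In particular $N$ is finitely generated if and only if $\widehat N$ is, and $S(G,N)=S(G,\widehat N)$ consists of two antipodal points.

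For the forward direction I would start with a finitely presented normal fibre product $H$ of co-rank one, set $N_i=H\cap G_i$, and let $\mu:G_1/N_1\buildrel\sim\over\to G_2/N_2$ be the induced isomorphism. Passing to the saturations $\widehat N_i$ and choosing characters $\chi_i^0:G_i\to\Z$ with kernel $\widehat N_i$, the criterion of Theorem \ref{twistedfinpres} applied to the two pairs $[(\chi_1,\chi_2)]$ with $\chi_1+\mu^*(\chi_2)=0$ reproduces the two disjunctions already isolated in the proof of Proposition \ref{rank1part}. From these I would argue trichotomously: either both antipodes of $S(G_1,\widehat N_1)$ lie in $\Sigma^1(G_1)$, in which case Theorem \ref{bierirenz} makes $\widehat N_1$, and hence $N_1$, finitely generated (condition (i)); or the analogous phenomenon occurs on the $G_2$ side, giving (ii); or both $\Sigma^1(G_i)^c\cap S(G_i,\widehat N_i)$ are nonempty but each reduces to a single antipode, in which case \cite{Strebelnotes} Theorem A3.4 identifies each $G_i$ as a strict ascending HNN-extension with finitely generated base, establishing (iii).

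For the converse I would construct the required fibre products directly. Under (i) or (ii), I would enlarge the given finitely generated $N_i$ (if necessary) to a still finitely generated normal subgroup with $\Z$-quotient, pair it with any $N_j\geq G_j'$ on the other side with $G_j/N_j\cong\Z$, and invoke Example \ref{all} (the asymmetric 1-2-3 Theorem) to conclude that the resulting normal fibre product is finitely presented. Under (iii), I would write each $G_i=N_i\rtimes\langle t_i\rangle$ coming from the ascending HNN-decomposition, choosing the stable letters $t_i$ so that $t_iB_it_i^{-1}\subsetneq B_i$ for both $i$, and apply Proposition \ref{rank1part}(ii) directly to the isomorphism $t_1\mapsto t_2$.

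The main obstacle I anticipate is the bookkeeping in the forward direction: one has to track the sign $\epsilon=\pm 1$ in the relation $\mu^*(\chi_2^0)=\epsilon\chi_1^0$ and check that, once conditions (i) and (ii) have both been ruled out, the two sigma disjunctions really do single out a unique element of $\Sigma^1(G_i)\cap S(G_i,\widehat N_i)$ on each side. Once this is handled, the identification of each $G_i$ as a strict ascending HNN-extension follows from the standard description of $\Sigma^1$ for groups with a $\Z$-quotient, and the converse is essentially a direct application of the asymmetric 1-2-3 Theorem and Proposition \ref{rank1part}.
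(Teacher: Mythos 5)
Your proposal is correct and follows essentially the same route as the paper, whose entire proof is the remark that one reruns the argument of Proposition \ref{rank1part} while allowing both choices $t_1\mapsto t_2$ and $t_1\mapsto t_2^{-1}$; your passage to the saturations $\sqrt{N_i}$, the trichotomy extracted from Theorem \ref{twistedfinpres} together with Theorem \ref{bierirenz} and Strebel's Theorem A3.4, and the orientation bookkeeping are exactly the details that remark compresses. Your converse, using Example \ref{all} for cases (i)--(ii) and Proposition \ref{rank1part}(ii) with suitably oriented stable letters for case (iii), is likewise the construction the paper intends.
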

\begin{proof} The existence of a normal fibre product implies that $G_i'<G_i$ for $i=1,2$ and so do any of the conditions i), ii) or iii) so we may assume that. The proof is then a variation of the proof of Proposition \ref{rank1part} considering the two possibilities $t_1\mapsto t_2$ and $t_1\mapsto t_2^{-1}$.
\end{proof}

Now we consider the case when $G:=G_1=G_2$. Recall that a subset $\Sigma\subseteq S(G)$ is called {\sl 2-tame} if $\Sigma\cap-\Sigma=\emptyset$.
 We may use Theorem \ref{twistedfinpres} to determine when an untwisted normal fibre product is finitely presented. 

\begin{corollary}\label{untwistedfinpres} Let $G'\leq N$ be a normal subgroup of the finitely presented group $G$. Then the untwisted $N$-fibre product of $G\times G$ is finitely presented if and only if 
$$\Sigma^1(G)^c\cap S(G,N)\subseteq -\Sigma^1(G).$$ 
\end{corollary}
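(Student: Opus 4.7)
The plan is simply to specialize Theorem \ref{twistedfinpres} to the case $G_1 = G_2 = G$, $N_1 = N_2 = N$, with $\mu = 1_d$ the identity map on $G/N$. The claimed corollary should then fall out by unwinding the notation.

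First I would identify the dual $\mu^*$ for this choice. Since $\mu$ is the identity on $G/N$, the induced map $\mu^* : \text{Hom}(G/N, \R) \to \text{Hom}(G/N, \R)$ is also the identity. Under the canonical identification of $S(G, N)$ with the character sphere of $G/N$ (via inflation along $G \twoheadrightarrow G/N$), the map $\mu^*$ becomes the identity on $S(G, N)$. In particular, for any $[\chi] \in \Sigma^1(G)^c \cap S(G, N)$ we have $[\mu^*(\chi)] = [\chi]$.

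Next I would plug this into the condition (\ref{cond}) from Theorem \ref{twistedfinpres}, which reads
$$[\mu^*(\Sigma^1(G)^c)] \cap S(G, N) \subseteq -\Sigma^1(G) \cap S(G, N).$$
Since $\mu^*$ is only meaningful on characters already vanishing on $N$, the left-hand side coincides with $\Sigma^1(G)^c \cap S(G, N)$. Moreover the containment in $S(G, N)$ on the right is automatic once the left-hand side is already a subset of $S(G, N)$, so the condition collapses to
$$\Sigma^1(G)^c \cap S(G, N) \subseteq -\Sigma^1(G),$$
which is exactly the statement of the corollary.

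There is essentially no obstacle here, as the result is a direct specialization. The only point demanding minor care is the slight notational ambiguity in $[\mu^*(\Sigma^1(G_2)^c)]$ of Theorem \ref{twistedfinpres}, where $\Sigma^1(G_2)^c$ lives a priori in $S(G_2)$ while $\mu^*$ is defined via characters factoring through $G_2/N_2$; this is resolved by observing that only characters in $\Sigma^1(G_2)^c$ that also lie in $S(G_2, N_2)$ contribute to the intersection with $S(G_2, N_2)$, which is precisely what happens automatically in the symmetric setting above.
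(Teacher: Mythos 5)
Your proof is correct and is exactly the paper's (implicit) argument: the corollary is stated as a direct specialization of Theorem \ref{twistedfinpres} with $G_1=G_2=G$, $N_1=N_2=N$ and $\mu=1_d$, so that $\mu^*$ is the identity and condition (\ref{cond}) collapses to $\Sigma^1(G)^c\cap S(G,N)\subseteq-\Sigma^1(G)$. Your remark resolving the notational ambiguity (that $\mu^*$ only acts on characters vanishing on $N$, so only $\Sigma^1(G)^c\cap S(G,N)$ contributes) is the right reading of the theorem's condition.
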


 A direct application yields the following which is in a sense a generalization of Theorem 1 of \cite{BBHM} since it applies to a wider family of groups but it is more restrictive in the sense that the group  $N$ is here required to lie over the commutator of $G$.

\begin{corollary}\label{nonabfree} Let $G$ be a finitely presented group that contains no non-abelian free subgroup and let $N\geq G'$ be a normal subgroup of $G$.  Then the untwisted $N$-fibre product in $G\times G$ is finitely presented.
\end{corollary}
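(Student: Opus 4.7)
\emph{Plan.} My strategy is to deduce the corollary directly from Corollary~\ref{untwistedfinpres}, which reduces finite presentability of the untwisted $N$-fibre product to the inclusion $\Sigma^1(G)^c\cap S(G,N)\subseteq -\Sigma^1(G)$. Unfolded, this inclusion says that no $[\chi]\in S(G,N)$ has both $[\chi]$ and $[-\chi]$ in $\Sigma^1(G)^c$; equivalently, no antipodal pair of characters meeting $S(G,N)$ lies entirely in $\Sigma^1(G)^c$. Since the hypothesis on $G$ is independent of $N$, the natural move is to prove the stronger sphere-wide statement that $\Sigma^1(G)^c$ itself is 2-tame, in the sense recalled just before Corollary~\ref{untwistedfinpres}. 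This bypasses $N$ altogether and gives the needed inclusion for \emph{every} normal $N\geq G'$ simultaneously.

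The key input is a classical theorem from Bieri--Neumann--Strebel theory: if $G$ is a finitely generated group and there exists a non-zero character $\chi : G\to\R$ with $[\chi],[-\chi]\in\Sigma^1(G)^c$, then $G$ contains a non-abelian free subgroup. A convenient reference is Strebel's survey \cite{Strebelnotes}. Under our hypothesis this situation cannot arise, so the contrapositive yields exactly the 2-tameness of $\Sigma^1(G)^c$, i.e.\ $\Sigma^1(G)\cup-\Sigma^1(G)=S(G)$, which closes the argument.

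The only real obstacle I anticipate is citation hygiene: the free-subgroup consequence of non-tameness appears in several equivalent formulations in the literature, so I would want to pin down the exact statement in the form above rather than a derived version. No structural analysis of $N$, no Meinert-type computation, and no input from $\Sigma^2$ is required once that reference is in hand.
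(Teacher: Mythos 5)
Your proposal is correct and is essentially the paper's own argument: the paper likewise applies Corollary~\ref{untwistedfinpres} and cites Strebel's survey (Corollary B1.10 of \cite{Strebelnotes}) for the fact that a group with no non-abelian free subgroup has 2-tame $\Sigma^1(G)^c$, which immediately gives the required inclusion for every $N\geq G'$.
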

\begin{proof} By Corollary B 1.10 in \cite{Strebelnotes}, $\Sigma^1(G)^c$ is 2-tame, thus the condition of Corollary \ref{untwistedfinpres} holds. \end{proof}

In the next result we consider the opposite situation. 

\begin{corollary} Let $G$ be the fundamental group of a compact 3-manifold and $G'\leq N\normal G$. The untwisted $N$-fibre product in $G\times G$ is finitely presented if and only if $N$ is finitely generated.
\end{corollary}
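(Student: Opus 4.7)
The plan is to apply the criterion of Corollary \ref{untwistedfinpres} together with the well-known symmetry of the Bieri-Neumann-Strebel invariant for fundamental groups of compact 3-manifolds.

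First I would dispense with the easy direction. If $N$ is finitely generated, then by Theorem \ref{bierirenz} (applied with $m=1$) one has $S(G,N)\subseteq\Sigma^1(G)$, so $\Sigma^1(G)^c\cap S(G,N)=\emptyset$ and the inclusion required by Corollary \ref{untwistedfinpres} holds vacuously; hence the untwisted $N$-fibre product is finitely presented.

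For the converse I would invoke the symmetry theorem for 3-manifold groups, namely that if $G=\pi_1(M)$ for a compact 3-manifold $M$, then
\[
\Sigma^1(G)=-\Sigma^1(G).
\]
(This is a classical consequence of the fact that, on the free part of $\mathrm{Hom}(G,\R)$, the complement of $\Sigma^1(G)$ is described by the faces of the Thurston norm ball, which is centrally symmetric; see the discussion in \cite{Strebelnotes}.) Assuming the fibre product is finitely presented, Corollary \ref{untwistedfinpres} gives
\[
\Sigma^1(G)^c\cap S(G,N)\subseteq -\Sigma^1(G)=\Sigma^1(G),
\]
which is possible only if $\Sigma^1(G)^c\cap S(G,N)=\emptyset$, that is, $S(G,N)\subseteq\Sigma^1(G)$. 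Theorem \ref{bierirenz} then yields that $N$ is finitely generated.

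The only non-routine ingredient is the symmetry $\Sigma^1(G)=-\Sigma^1(G)$ for compact 3-manifold groups; everything else is a direct unwinding of Corollary \ref{untwistedfinpres} and Theorem \ref{bierirenz}. Once that symmetry is quoted, the argument is a one-line set-theoretic observation.
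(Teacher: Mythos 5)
Your proposal is correct and follows essentially the same route as the paper: both reduce the statement to the criterion $\Sigma^1(G)^c\cap S(G,N)\subseteq-\Sigma^1(G)$ (Corollary \ref{untwistedfinpres}, equivalently Theorem \ref{twistedfinpres}) and then invoke the symmetry $\Sigma^1(G)=-\Sigma^1(G)$ for compact 3-manifold groups, which the paper cites as \cite{BieriNeumannStrebel} Corollary F, to force $S(G,N)\subseteq\Sigma^1(G)$ and conclude via Theorem \ref{bierirenz}. The only difference is cosmetic: the paper quotes the symmetry directly from \cite{BieriNeumannStrebel} rather than via the Thurston norm discussion you sketch.
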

\begin{proof} By \cite{BieriNeumannStrebel} Corollary F, $\Sigma^1(G)^c=-\Sigma^1(G)^c$.  So the result follows from Theorem \ref{twistedfinpres}.
\end{proof}

Another consequence is that the twisted fibre product with $-id$ is rarely finitely presented.

\begin{corollary}\label{twisted-1} Let $G$ be finitely presented and $G'\leq N\leq G$. The $N$-fibre product $H_{-id}$ is finitely presented if and only if $N$ is finitely generated.
\end{corollary}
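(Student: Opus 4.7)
The plan is to reduce the claim to a direct application of Theorem \ref{twistedfinpres} with the particular choice $\mu=-\mathrm{id}\colon G/N\to G/N$.

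First I observe that the dual map $\mu^*\colon \mathrm{Hom}(G/N,\R)\to\mathrm{Hom}(G/N,\R)$ is just $-\mathrm{id}$, and the antipodal map $-\mathrm{id}$ is a bijection of $S(G)$ that preserves the subset $S(G,N)$ setwise. Therefore $[\mu^*(\Sigma^1(G)^c)]=-\Sigma^1(G)^c=(-\Sigma^1(G))^c$, and the finiteness condition provided by Theorem \ref{twistedfinpres} takes the form
$$
(-\Sigma^1(G))^c\cap S(G,N)\subseteq -\Sigma^1(G)\cap S(G,N).
$$
The left and right sides are complementary subsets of $S(G,N)$, so the inclusion is equivalent to
$-\Sigma^1(G)^c\cap S(G,N)=\emptyset$, i.e.\ $S(G,N)\subseteq -\Sigma^1(G)$. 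Applying the antipodal map once more, this is the same as $S(G,N)\subseteq\Sigma^1(G)$.

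Now I invoke Theorem \ref{bierirenz} with $m=1$: since $G'\leq N$, the inclusion $S(G,N)\subseteq\Sigma^1(G)$ is equivalent to $N$ being of type $\F_1$, i.e.\ finitely generated. This gives both directions at once: if $N$ is finitely generated the criterion of Theorem \ref{twistedfinpres} holds trivially (as already noted in Example \ref{all}), and conversely finite presentability of $H_{-\mathrm{id}}$ forces $S(G,N)\subseteq\Sigma^1(G)$ and hence the finite generation of $N$.

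No step should present a genuine obstacle: the only point that needs care is the bookkeeping with the antipodal map, making sure that the inclusion coming from Theorem \ref{twistedfinpres} is correctly transferred to $\Sigma^1(G)^c\cap S(G,N)=\emptyset$ by exploiting that $\Sigma^1(G)$ and $\Sigma^1(G)^c$ are complementary and that $-\mathrm{id}$ preserves $S(G,N)$. Once this is in place, Theorem \ref{bierirenz} closes the argument in one line.
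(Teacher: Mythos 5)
Your proof is correct and follows the route the paper intends: specializing the criterion of Theorem \ref{twistedfinpres} to $\mu=-\mathrm{id}$, noting that the two sides of the resulting inclusion are complementary in $S(G,N)$ so the inclusion forces $\Sigma^1(G)^c\cap S(G,N)=\emptyset$, and then concluding with Theorem \ref{bierirenz} for $m=1$. The antipodal-map bookkeeping is handled correctly, so there is nothing to add.
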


Using Theorem \ref{twistedfinpres} we can construct our first example of a group $G$ such that the $G'$-fibre products $H_{id}$ and $H_{-id}$ are not finitely presented but there is some $\mu:G/G'\buildrel\sim\over\to G/G'$ such that $H_{\mu}$ is.

\begin{example}\label{first} Let
$$G=<a,b\mid aba^2b=ba^2ba>.$$
Then $\text{rk}G/G'=2$. It is easy to compute $\Sigma^1(G)^c$ using what is known as Brown's algorithm  (see \cite{Strebelnotes}). This procedure yields
$$\Sigma^1(G)^c=\{[\chi],[-\chi]\}$$
with $\chi(a)=-1$, $\chi(b)=2$. Obviously, it is not 2-tame thus the untwisted $G'$-fibre product in $G\times G$ is not finitely presented (and neither is $H_{-1_d}$). However, for any $\mu:G/G'\buildrel\sim\over\to G/G'$ such that $[\mu^*(\chi)]\neq [\chi],[-\chi]$, the corresponding twisted $G'$-fibre product is finitely presented.
\end{example}

\section{Cooking automorphisms.}

So far we have been applying Theorem \ref{twistedfinpres} mainly to detect whether a given normal fibre product is finitely presented or not. In this section we are going to consider the problem of constructing, given finitely presented groups $G_1,G_2$ and normal subgroups $N_1,N_2$ lying over the commutator, a suitable map $\mu$ so that $H_\mu$ is finitely presented.  Example \ref{first} illustrates in a quite primitive way that this is a problem in geometry. 
To make this more explicit, we are going to recall the notation used in \cite{Strebelnotes} A1.1d. 

\begin{notation}\label{coordinates}
Let $G'\leq N\leq G$ and assume for simplicity that $\sqrt{N}=N$, i.e., that $G/N$ is torsion free. Put $n=\text{rk}G/N$.
Once we have fixed a generating system of $G/N$, it induces an isomorphism 
$$\vartheta:G/N\buildrel\sim\over\to\Z^n.$$
Let $\E^n$ be the Euclidean space with the ordinary scalar product $\langle-,-\rangle$ (and norm $\|-\|$). Let $e_1,\ldots,e_n$ be the canonical basis in $\E_n$. Put 
$$\begin{aligned}
\vartheta^*:\E^n&\to\text{Hom}(G/N,\R)\\
v&\mapsto\chi_v\\
\end{aligned}$$
with $\chi_v(gN):=\langle v,\vartheta (g)\rangle$. 
Let $S^{n-1}$ be the unit $n-1$-sphere in $\E^n$. We also get a bijection
$S^{n-1}\to S(G,N).$
Given any $v\in\E^n$, we put $[v]:={1\over\|v\|}v\in S^{n-1}$ and for any subset $U$ of $\E^n$, we denote
$$[U]=\{[v]\mid v\in U\}.$$
We say that a subspace $U\leq\E^n$ is {\sl rationally defined} it is has some generating system consisting of rational vectors.

Now, let $G_i$, $N_i$, $\vartheta_i$ for $i=1,2$ as before so that $n=\text{rk}G_1/N_1=\text{rk}G_2/N_2$.
Any isomorphism $\mu:G_1/N_1\buildrel\sim\over\to G_2/N_2$ induces $\mu^*:\text{Hom}(G_2/N_2,\R)\buildrel\sim\over\to\text{Hom}(G_1/N_1,\R)$ and therefore we get an isomorphism $(\vartheta_1^*)^{-1}\mu^*\vartheta_2^*:\E^n\to\E^n$ which preserves the $\Z$-lattice $L:=\Z e_1+\ldots\Z e_n$. 
And conversely, any linear isomorphism $\varphi:\E_n\to\E_n$ preserving the lattice $L$ yields an isomorphism $\mu:G_1/\sqrt{N_1}\to G_2/\sqrt{N_2}$ with $\varphi=(\vartheta_1^*)^{-1}\mu^*\vartheta_2^*$.
Assume now that $G_1$, $G_2$ are finitely presented. Let $\Sigma_1,\Sigma_2\subseteq S^{n-1}$ so that
$$\Sigma^1(G_i)\cap S(G_i,N_i)=[\vartheta_i^*(\Sigma_i)]$$
for $i=1,2$.
 Using Theorem \ref{twistedfinpres} we deduce that if there is some $\varphi:\E_n\to\E_n$ preserving the lattice $L$ and such that
 $$[\varphi(\Sigma_2^c)]\subseteq-\Sigma_1,$$
then there is some finitely presented normal fibre product $H_\mu$ in $G_1\times G_2$ so that $N_i=H_\mu\cap G_i$ for $i=1,2.$
\end{notation}

The next example illustrates a possible method to construct such a $\mu$ and substitutes a different construction in a previous version of the paper.  This and also the possibility of extending it to higher dimensions were suggested by the referee.

 \begin{example} Consider the following set
 $$\Sigma:=\Omega\cup-\Omega\subseteq S^{1}$$ where $\Omega$ is the closed arc between $[(-2,1)]$ and $[(2,1)]$.
Assume we want to construct a linear map $\varphi:\E^2\to\E^2$ such that $[\varphi(\Sigma^c)]\subseteq-\Sigma$. A possible way to do it is to  \lq\lq move $\E^2$ towards $\R(1,0)$". This is achieved for example by
 the linear map $\varphi_\alpha$ with matrix
 $$\begin{pmatrix}
 1&\alpha\\
 0&1\\
 \end{pmatrix}$$
when $\alpha$ is big enough. More explicitly, $[\varphi_\alpha(\Omega)]$ is the closed arc determined by the intersection of $S^1$ with the region between the half rays $\R^+(\alpha-2,1)$ and  $\R^+(\alpha+2,1)$ thus $[\varphi_\alpha(\Sigma^c)]\subseteq-\Sigma$ if and only if $\alpha>4$. Obviously, the idea behind this is that we are \lq\lq putting more weight in the first coordinate". Note however that one has to do that carefully: consider a linear map $\phi_\alpha$ with matrix of the form
 $$\begin{pmatrix}
 \alpha&1\\
 -1&0\\
 \end{pmatrix}.$$
 This map seems to be moving everything towards $\R(1,0)$ but note that for $v:=(-1,\alpha)$ we have $\phi_\alpha(v)=(1,0)\in\Sigma^c$ and for any $\alpha>0$, $[v]\in\Sigma^c$.

   \end{example}

   \begin{proposition}\label{tech} Let $\Sigma_1,\Sigma_2\subseteq S^{n-1}$ be open sets and $k\leq m$ with $m+k=n$ such that for the subspaces
   $U=\R e_1+\ldots+\R e_k,W=\R e_1+\ldots +\R e_m\leq\E^n$ we have $[U]\subseteq\Sigma_2$, $[W]\subseteq\Sigma_1$.
   Then there is a linear map $\varphi:\E^n\to\E^n$ preserving the lattice $L$ such that
   $$[\varphi(\Sigma_2^c)]\subseteq-\Sigma_1.$$
   \end{proposition}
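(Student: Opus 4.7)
The plan is to realize $\varphi$ as a single integer shear. Define the linear map $S\colon\E^n\to\E^n$ by $S(e_{m+j})=e_j$ for $j=1,\dots,k$ and $S(e_i)=0$ for $i=1,\dots,m$. Since $k\le m$ we have $U\subseteq W$, and by construction $\operatorname{Im}(S)\subseteq U$ while $S$ vanishes on $W$, so $S^2=0$. For every $\alpha\in\Z$ the map $\varphi_\alpha:=I+\alpha S$ then lies in $\GL_n(\Z)$ with inverse $I-\alpha S$ and preserves the lattice $L$; the goal is to show that $[\varphi_\alpha(\Sigma_2^c)]\subseteq-\Sigma_1$ for $\alpha$ large enough.

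I would next convert the hypotheses into two compactness constants. As $\Sigma_2^c$ is closed in $S^{n-1}$ and disjoint from $[U]$, there is $c>0$ so that every unit vector $v=u+u'\in U\oplus U^{\perp}$ lying in $\Sigma_2^c$ satisfies $\|u'\|\ge c$. Since $[W]=-[W]\subseteq-\Sigma_1$ and $-\Sigma_1$ is open, there is $\varepsilon>0$ such that any unit vector whose angle with $W$ has sine at most $\varepsilon$ belongs to $-\Sigma_1$. It therefore suffices to show that $\alpha$ can be chosen so that the angle between $\varphi_\alpha(v)$ and $W$ has sine at most $\varepsilon$ for every unit $v\in\Sigma_2^c$.

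The key computation is the following. Decomposing $v=u+u'_1+z$ along $U\oplus(U^{\perp}\cap W)\oplus W^{\perp}$, one has $\|u'_1\|^2+\|z\|^2=\|u'\|^2\ge c^2$, and
$$\varphi_\alpha(v)=(u+\alpha Sz)+u'_1+z,$$
where the three summands lie in the three mutually orthogonal subspaces $U$, $U^{\perp}\cap W$, and $W^{\perp}$. The $W^{\perp}$-component of $\varphi_\alpha(v)$ is still $z$, and the sine of the angle between $\varphi_\alpha(v)$ and $W$ equals $\|z\|/\|\varphi_\alpha(v)\|$. Using $\|Sz\|=\|z\|$, I would split into two cases: if $\|z\|\ge 2/\alpha$ then $\|u+\alpha Sz\|\ge\alpha\|z\|-\|u\|\ge\alpha\|z\|/2$, giving sine at most $2/\alpha$; if $\|z\|<2/\alpha$ then the unconditional bound $\|\varphi_\alpha(v)\|^2\ge\|u'_1\|^2+\|z\|^2\ge c^2$ gives sine at most $\|z\|/c<2/(\alpha c)$. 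Taking $\alpha>2/(\varepsilon c)$ then does the job.

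The main obstacle, and the reason for insisting that $\operatorname{Im}(S)\subseteq U$ rather than just $\operatorname{Im}(S)\subseteq W$, is the intermediate regime where $\|z\|$ is too small for the shear $\alpha Sz$ to dominate $u$ but not small enough to be negligible. Confining the image of the shear to $U$ makes $u'_1$ orthogonal to every term the shear touches, which is precisely what yields the unconditional lower bound $\|\varphi_\alpha(v)\|^2\ge\|u'_1\|^2+\|z\|^2\ge c^2$ that saves the argument in that regime.
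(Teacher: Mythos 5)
Your proof is correct and takes essentially the same approach as the paper: the integer shear $\varphi_\alpha=I+\alpha S$ is exactly the block matrix $\Bigl(\begin{smallmatrix} I_k&0&\alpha I_k\\ 0&I_{m-k}&0\\ 0&0&I_k\end{smallmatrix}\Bigr)$ used there, and your two-case sine estimate is just a streamlined, direct version of the paper's three-step distance argument (which proves the contrapositive: if $[\varphi_\alpha(v)]$ is far from $[W]$, then $v$ is close to $[U]$, hence in $\Sigma_2$).
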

   \begin{proof} Note first that by openness, there is some $\epsilon>0$ such that 
$$\{v\in S^{n-1}\mid d(v,[U])^2<\epsilon\}\subseteq\Sigma_2,$$
$$\{v\in S^{n-1}\mid d(v,[W])^2<\epsilon\}\subseteq-\Sigma_1.$$
(We are using the obvious fact that $[W]=-[W]\subseteq-\Sigma_1$).
Let $0\leq\alpha\in\Z$ and consider the map $\varphi_\alpha:\E^n\to\E^n$ given by the matrix
$$ \Bigg( \begin{array}{ccccc}
    I_k &\vline &0 &\vline &\alpha I_k \\ 
\hline
    0 &\vline &I_{m-k} &\vline &0 \\ 
\hline
 0 &\vline &0 &\vline & I_k \\ 

  \end{array}\Bigg).$$
Obviously, this is an invertible linear map that preserves the lattice $L$.  Let $T=\R e_{m+1}+\ldots+\R e_n$.
For $v\in S^{n-1}$  put $v=a+b$ with $a\in W$, $b\in T$ (we fix this notation throughout the proof). Then
$\varphi_\alpha(v)=a+f_\alpha(b)+b$
with $f_\alpha:T\to W$ an injective map with image $U$.

\bigskip

\noindent{\bf Step 1}: There is an $M$, not depending on $\alpha$, such that 
$d([\varphi_\alpha(v)],[W])^2>\epsilon$
if and only if
\begin{equation}\label{im}
{\|a+f_\alpha(b)\|^2\over\|b\|^2}<M.
\end{equation}

Observe that $1=\|v\|^2=\|a\|^2+\|b\|$ and $\|\varphi_\alpha(v)\|^2=\|a+f_\alpha(b)\|^2+\|b\|^2$. The point in 
$[W]$ closest to $[\varphi_\alpha(v)]$ is $[a+f_\alpha(b)]$. Then

 $$\begin{aligned}
 &\|[\varphi_\alpha(v)]-[a+f_\alpha(b)]\|^2=\\
&= \|a+f_\alpha(b)\|^2\Bigg|{1\over \|\varphi_\alpha(v)\|}-{1\over\|a+f_\alpha(b)\|}\Bigg|^2+{\|b\|^2\over\|\varphi_\alpha(v)\|^2}=2-2{\|a+f_\alpha(b)\|\over\|\varphi_\alpha(v)\|}.\\
 \end{aligned}$$
 So if we put
 $$\beta={\|a+f_\alpha(b)\|^2\over\|b\|^2}+1,$$
 the claim follows since
 $$d([\varphi_\alpha(v)],[W])^2=2\Big(1-\sqrt{1-{1\over\beta}}\Big).$$

\noindent{\bf Step 2}: There is some $0<\epsilon_1$ such that if $\|b\|^2<\epsilon_1$ and (\ref{im}) holds, then
\begin{equation}\label{v}
d(v,[U])<\epsilon.
\end{equation}
We begin by computing that distance. Let $U':=\R e_{k+1}+\ldots+\R e_m$ and put $a=u+u'$ with $u\in U$, $u'\in U'$. The point of $[U]$ which is closest to $v$ is $[u]$. Then (recall that $\|u\|^2+\|u'\|^2+\|b\|^2=1$)
$$d(v,[U])^2=d(v,[u])^2=\Big\|u-[u]+u'+b\Big\|^2=2(\|u'\|^2+\|b\|^2).$$
Take $\epsilon_1:={\epsilon\over 2(M+1)}.$ As  (\ref{im}) implies 
$\|u'\|^2\leq M\|b\|^2\leq M\epsilon_1$
we have
$$d(v,[U])^2=2(\|u'\|^2+\|b\|^2)\leq 2(M\epsilon_1+\epsilon_1)=\epsilon.$$

\noindent{\bf Step 3}: There is some $\alpha$ big enough such that if  (\ref{im}) holds, then $\|b\|^2<\epsilon_1$. To see it note that 
$$d(v,-f_\alpha(b))^2=\|v+f_\alpha(b)\|^2=\|a+f_\alpha(b)\|^2+\|b\|^2\leq (M+1)\|b\|^2\leq M+1.$$
As $\|f_\alpha(b)\|=\alpha\|b\|$, the three points $0$, $v$ and $-f_\alpha(v)$ form a triangle whose sides have lengths $1$, $\alpha\|b\|$ and $d(v,-f_\alpha(b))$.
By the triangle inequality this means
$$\alpha\|b\|\leq 1+d(v,-f_\alpha(b))\leq 1+\sqrt{M+1}$$
thus
$$\|b\|\leq{1+\sqrt{M+1}\over\alpha}.$$

We can now finish the proof. Let $\alpha$ be as in Step 3. If $v\in S^{n-1}$ is such that $[\varphi_\alpha(v)]\in-\Sigma_1^c$, then 
$v\in\Sigma_2$.  Therefore 
  $$[\varphi(\Sigma_2^c)]\subseteq-\Sigma_1.$$

   \end{proof}

The next result is surely well known, but I have not been able to find an explicit reference in the literature except of the case of co-rank 1 which is \cite{Strebelnotes} Corollary A4.13.

\begin{lemma}\label{rational} Let $G$ be finitely generated and $G'\leq N\leq G$. The following conditions are equivalent:
\begin{itemize}
\item[i)] There is a finitely generated subgroup $G'\leq N\leq K\leq G$ of co-rank $k$,

\item[ii)] there is a rationally defined linear subspace $V\leq\E^n$ of dimension $k$ with $[\vartheta(V)]\subseteq\Sigma^1(G)\cap S(G,N)$,

\item[iii)] there is a linear subspace $V\leq\E^n$ of dimension $k$ with $[\vartheta(V)]\subseteq\Sigma^1(G)\cap S(G,N)$.
\end{itemize}
\end{lemma}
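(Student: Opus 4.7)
The implication (ii)$\Rightarrow$(iii) is trivial; the work is in (i)$\Leftrightarrow$(ii) and (iii)$\Rightarrow$(ii). For the equivalence of (i) and (ii) I rely on a perfect duality: given $N\leq K\leq G$ of co-rank $k$, the torsion-freeness of $G/N$ makes $\vartheta(K/N)$ a saturated subgroup of $\Z^n$ of rank $n-k$, whose $\R$-span has rationally defined orthogonal complement $V$ of dimension $k$; and conversely a rationally defined $V$ of dimension $k$ recovers $K$ by pulling back $V^{\perp}\cap\Z^n$ through $\vartheta$. Under this dictionary, $\chi_v$ vanishes on $K$ exactly when $v$ is orthogonal to $\vartheta(K/N)$, so $S(G,K)=[\vartheta^*(V)]$. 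Theorem~\ref{bierirenz} in the case $m=1$ (where $\FP_1$ means finitely generated and $\Sigma^1(G,\Z)=\Sigma^1(G)$) therefore gives: $K$ is finitely generated iff $[\vartheta^*(V)]\subseteq\Sigma^1(G)$. This delivers (i)$\Leftrightarrow$(ii) in both directions.

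The content of the lemma is thus (iii)$\Rightarrow$(ii). I would exploit the openness of $\Sigma^1(G)$ in $S(G)$ (\cite{BieriRenz} Theorem~A): the preimage $(\vartheta^*)^{-1}(\Sigma^1(G))$ is an open subset of $S^{n-1}$ containing the compact sphere $V\cap S^{n-1}$, so there exists some $\delta>0$ for which the $\delta$-neighbourhood of $V\cap S^{n-1}$ in $S^{n-1}$ still lies inside this preimage. Rationally defined $k$-dimensional subspaces are dense in the Grassmannian of $\E^n$ (perturb a basis of $V$ to nearby rational vectors and take the span), so I can pick such a rationally defined $V'$ whose unit sphere $V'\cap S^{n-1}$ sits inside that $\delta$-neighbourhood. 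Then $[\vartheta^*(V')]\subseteq\Sigma^1(G)\cap S(G,N)$, which is (ii).

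The only nontrivial step is the approximation in (iii)$\Rightarrow$(ii), and it is routine once one has openness of $\Sigma^1(G)$ together with density of rational subspaces in the Grassmannian. The rest is a direct translation via the correspondence between overgroups $N\le K\le G$, saturated subgroups of $\Z^n$, and rationally defined subspaces of $\E^n$.
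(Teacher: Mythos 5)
Your proof is correct and follows essentially the same route as the paper: the dictionary between co-rank-$k$ overgroups of $N$ and rationally defined $k$-dimensional subspaces of $\E^n$ (equivalently, subspaces of $\Hom(G/N,\R)$ spanned by discrete characters) combined with Theorem \ref{bierirenz} in the case $m=1$ gives (i)$\Leftrightarrow$(ii), and openness of $\Sigma^1(G)$ together with approximating a basis of $V$ by rational vectors (keeping them independent) gives (iii)$\Rightarrow$(ii), exactly as in the paper. The only slip is immaterial: $\vartheta(K/N)$ need not be saturated in $\Z^n$, so pulling back $V^{\perp}\cap\Z^n$ recovers $\sqrt{K}$ rather than $K$, but since $S(G,K)=S(G,\sqrt{K})$ and only existence is claimed, neither direction is affected.
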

\begin{proof}  Let $G'\leq N\leq G$ be a normal subgroup of co-rank $n$. Assume we have a $k$-dimensional subspace $\text{Hom}(G/N,\R)$ generated by discrete characters $\chi_1\ldots,\chi_k:G/N\to\R$. Then for the subgroup $K=\text{Ker}\chi_{1}\cap\ldots\cap\text{Ker}\chi_{k}$ we have
$N\leq K\leq G$ and $\text{rk}G/K=k$. Conversely, for any $N\leq K\leq G$ of co-rank $k$, the subspace generated in $\text{Hom}(G/N,\R)$ by the subsphere $S(G,K)$ has dimension $k$ and is generated by discrete characters. (If we start with $K$, consider the associated subspace $S(G,K)$ and then the intersection of the kernels of a basis, what we recover is $\sqrt{K}$).  Moreover, Theorem \ref{bierirenz} implies that the subgroup $K$ is finitely generated if and only if $S(G,K)$ lies in $\Sigma^1(G)\cap S(G,N).$ 

Note also that under the coordinate map $\vartheta$, any linear subspace $V$ of $\E^n$ corresponds to a linear subspace of $\text{Hom}(G/N,\R)$ so that
 $V$ is rationally defined if and only if $\vartheta(V)$ is generated by discrete characters. 
This yields the equivalence between i) and ii). To see that iii) implies ii) one can proceed as follows: 
Choose a basis $\{v_1,\ldots, v_k\}$ of $V$ so that for $i=1,\ldots,k$, $\|v_i\|=1$ and the $v_i$ are pairwise orthogonal. For some $\epsilon>0$ small enough, choose $u_1,\ldots,u_k\in\E^n$ so that  for every $i=1,\ldots,k$ we have $\|u_i\|=1$, $\|u_i-v_i\|<\epsilon$, there is some rational vector in $\R u_i$ and moreover the set $\{u_1,\ldots,u_k\}$ is linearly independent. The fact that there is a family with the first properties follows from the density result proven in \cite{Strebelnotes} Lemma B3.3, it is possible to check that the $u_i$'s can be taken to be linear independent by proceeding by induction, using the fact that each $v_i$ is \lq\lq far away" from $S^{n-1}\cap(\R v_1+\ldots+\R v_{i-1})$. Alternatively, take into account that once $u_1,\ldots,u_{i-1}$ have been chosen then the set of those $p\in S^{n-1}$ such that $\|p-v_i\|<\epsilon$ and $p\not\in\R u_1+\ldots+\R u_{i-1}$ is open in $S^{n-1}$ so the claim follows from the mentioned density result. Then one easily checks that making $\epsilon$ small, $[U]$ and $[V]$ are \lq\lq close enough" so that if $[\vartheta(V)]\subseteq\Sigma^1(G)$, then $[\vartheta(U)]\subseteq\Sigma^1(G)$ (recall that $\Sigma^1(G)$ is open).

\end{proof}

Note that in all the examples below we will use the equivalence between i) and ii) only. 

\begin{example} For most groups $G$ for which is known, the invariant $\Sigma^1(G)^c$ is rationally defined, more explicitly, it is a union of intersections of rationally defined closed hemispheres.  However, this is not always true.
Let $F$ be Thompson's group, i.e., the group of dyadic piecewise linear self-homeomorphisms of the unit interval having finitely many break points. Then $F/F'$ has torsion free rank 2 and $\Sigma^1(F)^c$ consist of two points, but they are not discrete, in other words, the associated subset in $S^1$ is not rationally defined (see \cite{BieriNeumannStrebel}). 
\end{example}

\begin{theorem}\label{genasym} Let $G_1,G_2$ be finitely presented, $G_1'\leq N_1\leq G_1$, $G_2'\leq N_2\leq G_2$ such that $G_1/N_1\cong G_2/N_2$. Assume that there are  $N_1\leq K_1\leq G_1$, $N_2\leq K_2\leq G_2$ both finitely generated and of co-rank $m$ and $k$ respectively such that 
$$k+m=n:=\text{rk}G_1/N_1=\text{rk}G_2/N_2.$$
 Then there is some normal finitely presented fibre product $H$ in $G_1\times G_2$ with $H\cap G_i=N_i$ for $i=1,2.$
\end{theorem}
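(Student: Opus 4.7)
The plan is to cook up an isomorphism $\mu$ via Proposition \ref{tech} and then verify finite presentability of $H_\mu$ using Theorem \ref{twistedfinpres}. Without loss of generality I assume $k\leq m$ (swap $G_1$ and $G_2$ otherwise). Since each $\Sigma^1(G_i)\cap S(G_i,N_i)$ depends only on $G_i/\sqrt{N_i}$ and $\mu^*$ depends only on $\mu$ modulo torsion (Notations \ref{mu*} and \ref{coordinates}), I work modulo torsion throughout and extend $\mu$ arbitrarily on torsion at the end, using that the isomorphism $G_1/N_1\cong G_2/N_2$ identifies the torsion subgroups.

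First, I apply Lemma \ref{rational} to $K_1$ and $K_2$ to obtain rationally defined subspaces $V_1,V_2\leq\E^n$ of dimensions $m$ and $k$ with $[V_i]\subseteq\Sigma_i$, where $\Sigma_i\subseteq S^{n-1}$ is the copy of $\Sigma^1(G_i)\cap S(G_i,N_i)$ pulled back via $(\vartheta_i^*)^{-1}$. The key flexibility is that $\vartheta_1$ and $\vartheta_2$ can be chosen independently. Since any rational subspace of $\E^n$ meets $L=\Z^n$ in a sublattice whose saturation is a direct summand, I pick a $\Z$-basis of $G_1/\sqrt{N_1}\cong\Z^n$ so that the saturated image of $K_1$ is spanned by $e_{m+1},\ldots,e_n$; this places $V_1=W:=\R e_1+\cdots+\R e_m$. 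Independently, I choose the basis for $G_2/\sqrt{N_2}$ so that $V_2=U:=\R e_1+\cdots+\R e_k$.

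With $k+m=n$, $k\leq m$, both $\Sigma_i$ open in $S^{n-1}$, $[W]\subseteq\Sigma_1$, and $[U]\subseteq\Sigma_2$, Proposition \ref{tech} supplies a linear map $\varphi:\E^n\to\E^n$ preserving $L$ such that $[\varphi(\Sigma_2^c)]\subseteq-\Sigma_1$. By Notation \ref{coordinates}, $\varphi$ corresponds to an isomorphism $G_1/\sqrt{N_1}\buildrel\sim\over\to G_2/\sqrt{N_2}$, which I extend to $\mu:G_1/N_1\buildrel\sim\over\to G_2/N_2$ as indicated. Via the identity $\mu^*\vartheta_2^*=\vartheta_1^*\varphi$, the inclusion $[\varphi(\Sigma_2^c)]\subseteq-\Sigma_1$ is precisely condition (\ref{cond}) of Theorem \ref{twistedfinpres}, so $H_\mu$ is finitely presented. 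The equality $H_\mu\cap G_i=N_i$ is immediate from the definition of the fibre product.

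The heavy lifting is already in Proposition \ref{tech}; this theorem is essentially a clean reduction. The one subtle point is the simultaneous coordinate alignment—placing both rational subspaces $V_1,V_2$ in the prescribed standard positions $W,U$—which is possible thanks to the rationality clause in Lemma \ref{rational}: it guarantees the required basis change preserves the $\Z$-lattice $L$, so that the output $\varphi$ of Proposition \ref{tech} comes back as a genuine group isomorphism rather than a merely real-linear one.
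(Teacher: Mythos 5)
Your proposal is correct and follows essentially the same route as the paper: normalize coordinates so that the character subspheres of the finitely generated subgroups $K_1,K_2$ become $[W]$ and $[U]$, invoke Proposition \ref{tech} to get a lattice-preserving $\varphi$, and translate back through Notation \ref{coordinates} and Theorem \ref{twistedfinpres}. The only (immaterial) differences are that you route the rational positioning through Lemma \ref{rational} and handle torsion by extending $\mu$ via a splitting, where the paper lifts $\hat\mu$ and uses commensurability of $H_\mu$ with $H_{\hat\mu}$.
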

\begin{proof} We may assume that that $k\leq m$. If $m=n$ it suffices to consider the fibre product $H_\mu$ associated to any $\mu:G_1/N_1\to G_2/N_2$ and use either  the asymmetric 1-2-3 Theorem, or Theorem \ref{twistedfinpres}. Use Notation \ref{coordinates} and observe that by choosing suitable generating systems of $G_1,G_2$, we may assume that
$[\vartheta_2(U)]=S(G_2,K_2)$ and $[\vartheta_1(W)]=S(G_1,K_1)$
for  $U=\R e_1+\ldots+\R e_k$ and $W=\R e_1+\dots+\R e_m$. Using Proposition \ref{tech} we deduce that there is some $\varphi:\E^n\to\E^n$ preserving the lattice $L$ with $[\varphi(\Sigma_2)]\subseteq-\Sigma_1$. By the discussion at the end of Notation \ref{coordinates} this implies the existence of an isomorphism $\hat\mu:G_1/\sqrt{N_1}\to G_2/\sqrt{N_2}$ so that $H_{\hat\mu}$ is finitely presented. Then $\hat\mu$ can be lifted to an iso $\mu:G_1/N_1\to G_1/N_2$ so that $H_\mu$ and $H_{\hat\mu}$ are commensurable so we get the result.
\end{proof}

\begin{corollary}\label{linear}
Let $G_1,G_2$ be finitely presented and put $n_i:=\text{rk}G_i/G_i'$. 
Assume that there are  $G_1'\leq K_1\leq G_1$, $G_2'\leq K_2\leq G_2$ both finitely generated and of co-rank $m$ and $k$ respectively. Then there is some normal finitely presented fibre product in $G_1\times G_2$ of co-rank 
$$n:=\text{min}\{m+k,n_1,n_2\}.$$ 
\end{corollary}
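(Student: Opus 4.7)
The plan is to arrange the hypotheses of Theorem \ref{genasym} directly, by constructing normal subgroups $N_i$ with $G_i'\leq N_i\leq K_i$, $G_1/N_1\cong G_2/N_2\cong\Z^n$, and with the co-ranks of the $K_i$'s summing to $n$. First, I would reduce to the case $m+k=n$. If instead $n=n_1\leq m+k$ (and $n\leq n_2$), I would replace $K_2$ by a finitely generated supergroup $K_2'\supseteq K_2$ of co-rank $k':=n-m\leq k$. Such a $K_2'$ exists because $G_2/K_2$ is finitely generated abelian (a quotient of $G_2/G_2'$), so it contains a subgroup of rank $k-k'$ whose preimage in $G_2$ is finitely generated (an extension of $K_2$ by a finitely generated abelian group) and of co-rank $k'$. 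The case $n=n_2$ is symmetric, and after this step we may assume $m+k=n\leq n_1,n_2$.

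Second, I would enlarge each $K_i$ to $\sqrt{K_i}$, the preimage in $G_i$ of the torsion of $G_i/K_i$; this is a finite extension of $K_i$, so still finitely generated of the same co-rank, and $G_i/\sqrt{K_i}\cong\Z^{c_i}$ (with $c_1=m$, $c_2=k$) is now torsion-free. Since $\Z^{c_i}$ is free, the projection $G_i/G_i'\twoheadrightarrow\Z^{c_i}$ splits and yields a decomposition $G_i/G_i'=(K_i/G_i')\oplus M_i$ with $M_i\cong\Z^{c_i}$. The group $K_i/G_i'$ is finitely generated abelian of torsion-free rank $n_i-c_i\geq n-c_i=c_j$ (where $\{i,j\}=\{1,2\}$), so I can pick a surjection $K_i/G_i'\twoheadrightarrow\Z^{c_j}$. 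Combined with an isomorphism $M_i\cong\Z^{c_i}$ this gives a surjection $\phi_i:G_i/G_i'\twoheadrightarrow\Z^{c_j}\oplus\Z^{c_i}=\Z^n$. Letting $N_i$ be the preimage in $G_i$ of $\ker\phi_i$, we obtain $G_i'\leq N_i$ and $G_i/N_i\cong\Z^n$; injectivity of $\phi_i$ on the summand $M_i$ forces $\ker\phi_i\subseteq K_i/G_i'$, so $N_i\leq K_i$ and $K_i/N_i\cong\Z^{c_j}$. In particular $G_1/N_1\cong G_2/N_2\cong\Z^n$.

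Finally, all the hypotheses of Theorem \ref{genasym} are now in place: the $K_i$ are finitely generated of co-ranks $m$ and $k$, one has $G_i'\leq N_i\leq K_i$, and $m+k=n=\text{rk}(G_1/N_1)=\text{rk}(G_2/N_2)$. Applying that theorem produces a finitely presented normal fibre product $H\leq G_1\times G_2$ with $H\cap G_i=N_i$, whose co-rank equals $n$. The delicate step of the plan is the second one: matching $G_1/N_1\cong G_2/N_2\cong\Z^n$ while keeping $N_i\leq K_i$ requires the torsion-free reduction on $G_i/K_i$, which is what makes the abelianization split off a free summand $M_i\cong\Z^{c_i}$ and leaves enough room inside $K_i/G_i'$ for a surjection onto $\Z^{c_j}$ assembling the desired $\Z^n$-quotient with kernel inside $K_i$.
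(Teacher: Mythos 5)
Your proof is correct and takes essentially the same route as the paper: both arguments are a rank-bookkeeping reduction to Theorem \ref{genasym}, adjusting the finitely generated subgroups (passing to overgroups, e.g.\ $\sqrt{K_i}$ or $N_iK_i$) so that their co-ranks sum to exactly $n$ and choosing subgroups $N_i$ of co-rank $n$ with $G_1/N_1\cong G_2/N_2$ inside them. Your write-up simply makes explicit the choices that the paper's proof leaves as a sketch.
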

\begin{proof} Let $n:=\text{min}\{m+k,n_1,n_2\}$. There are subgroups $G_i'\leq N_i\leq G_i$ such that $n=\text{rk}G_1/N_1=\text{rk}G_2/N_2$ and $\text{rk}G_1/N_1K_1+\text{rk}G_2/N_2K_2\geq n$. As the subgroups $N_1K_1$ and $N_2K_2$ are finitely generated, this means that working in the sub spheres $S(G_1,N_1)$ and $S(G_2,N_2)$ and taking suitable overgroups of $N_1K_1$ and $N_2K_2$ we may assume that $n=k+m$. Now, use Theorem \ref{genasym}.
 \end{proof}
 
\begin{remark} The asymmetric 1-2-3 theorem  implies the same but for co-rank 
$$\text{min}\{m,k\}.$$
\end{remark}

In some cases it is not difficult to detect the existence of a finitely generated subgroup of a given co-rank as  in Theorem \ref{genasym} or, equivalently, of a linear subspace $U$ with $[\vartheta(U)]\subseteq\Sigma^1(G)$. 
 
\begin{example}\label{2tame} Let $G$ be a finitely presented subgroup that contains no non abelian free subgroup. Then by \cite{BieriNeumannStrebel} Theorem D (the explicit argument will be recalled in Theorem \ref{rank2} below) there is a  finitely generated subgroup $G'\leq K<G$ of co-rank 1.  \end{example}

\begin{example}
If  $G$ is the fundamental group of a compact 3-manifold, \cite{BieriNeumannStrebel} Corollary F implies that $\Sigma^1(G)=-\Sigma^1(G)$. Therefore, if $\Sigma^1(G)\neq\emptyset,$ there is some $[\chi]$ with $[\chi],[-\chi]\in\Sigma^1(G)$. Thus again by Lemma \ref{rational} we deduce that there is a co-rank 1 finitely generated subgroup $G'\leq K<G$.
\end{example}

The next example was pointed out by the referee:

\begin{example}\label{product} Assume that $G=L_1\times L_2$ with $L_1,L_2$ finitely presented. Let $n=\text{min}\{\text{rk}L_1/L_1',\text{rk}L_2/L_2'\}$. There is some normal fibre product $K$ in $G$ of co-rank $n$ and by \cite{BaumslagRoseblade} Lemma 2, $K$ is finitely generated.

\end{example}

\begin{example}\label{prefer} Let $G$ be virtually solvable and finitely presented and assume that $G$ has finite Pr\"ufer rank (i.e., there is a bound on the smallest number of generators of every finitely generated subgroup of $G$). Let $n=\text{rk}G/G'$. Then there is a finitely generated $G'\leq K\leq G$ of co-rank $n-1$. To see it recall that by \cite{meinert2} Theorem 2.5 (see also \cite{BieriStrebel1} Proposition 2.5),  $\Sigma^1(G)^c$ is finite (and consists of discrete characters only). Then taking any hyperplane $U\subseteq\E^n$ such that $\Sigma^1(G)^c\cap\vartheta(U)=\emptyset$ and using Lemma \ref{rational} we get the claim. 
\end{example}

\begin{corollary} Assume that $G_1,G_2$ are finitely presented, that $G_2$ is virtually solvable of finite Pr\"ufer rank and that $G_1$ contains no non abelian free subgroup. Then there is some normal finitely presented fibre product in $G_1\times G_2$ of maximal co-rank (i.e. of co-rank $\text{min}\{\text{rk}G_1/G_1',\text{rk}G_2/G_2'\}$). 
\end{corollary}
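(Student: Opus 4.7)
The plan is to apply Corollary \ref{linear}, which reduces the problem to exhibiting finitely generated subgroups $G_1'\leq K_1\leq G_1$ and $G_2'\leq K_2\leq G_2$ of co-ranks $m$ and $k$ respectively, with $m+k\geq n:=\min\{n_1,n_2\}$, where $n_i:=\text{rk}(G_i/G_i')$. Once such $K_1,K_2$ are in hand, Corollary \ref{linear} immediately produces a normal finitely presented fibre product in $G_1\times G_2$ of co-rank $\min\{m+k,n_1,n_2\}$; if $m+k\geq n$ this equals $n$, the maximum possible.

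First I would dispose of the degenerate case. If $n_1=0$ or $n_2=0$, then $n=0$, and $G_1\times G_2$ itself (corresponding to $N_1=G_1$, $N_2=G_2$) is a normal finitely presented fibre product of co-rank $0$. So from here on I may assume $n_1,n_2\geq 1$.

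For $G_2$, I invoke Example \ref{prefer}: since $G_2$ is finitely presented, virtually solvable and of finite Pr\"ufer rank, the cited results of Meinert and Bieri-Strebel make $\Sigma^1(G_2)^c$ a finite set of discrete characters, and Lemma \ref{rational} then yields a finitely generated subgroup $G_2'\leq K_2\leq G_2$ of co-rank $k:=n_2-1$. For $G_1$, I invoke Example \ref{2tame}: since $G_1$ is finitely presented and contains no non-abelian free subgroup, Theorem D of \cite{BieriNeumannStrebel} supplies a finitely generated subgroup $G_1'\leq K_1<G_1$ of co-rank $m:=1$. Adding, $m+k=n_2\geq n$, and Corollary \ref{linear} delivers a normal finitely presented fibre product of co-rank $\min\{n_2,n_1,n_2\}=n$, which is the maximal possible co-rank.

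The main subtlety (rather than a real obstacle) is recognising that the no-non-abelian-free-subgroup hypothesis on $G_1$ is doing essential work even though $G_2$ supplies most of the co-rank: without Example \ref{2tame}'s co-rank $1$ subgroup of $G_1$, the sum would only be $m+k=n_2-1$, falling one short of $n$ precisely when $n_1\geq n_2$. The two asymmetric hypotheses on $G_1$ and $G_2$ thus combine, each contributing exactly the amount of co-rank needed on its side so that together they saturate the bound $\min\{n_1,n_2\}$.
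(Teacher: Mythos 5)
Your proof is correct and matches the paper's intended argument: the corollary is stated immediately after Examples \ref{2tame} and \ref{prefer} precisely so that one takes the co-rank $1$ finitely generated subgroup of $G_1$ (no non-abelian free subgroups) and the co-rank $n_2-1$ finitely generated subgroup of $G_2$ (virtually solvable of finite Pr\"ufer rank) and feeds them into Corollary \ref{linear}, exactly as you do. Your explicit treatment of the degenerate case $n_1=0$ or $n_2=0$ is a harmless addition the paper leaves implicit.
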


In the particular case when $G=G_1=G_2$ we get:

\begin{corollary}\label{doublerank} Let $G$ be finitely presented and assume that there is a finitely generated $H$ with $N\leq H\leq G$ such that $\text{rk}G/H\geq 2\text{rk}G/N$. Then there is some finitely presented twisted $N$-fibre product.
\end{corollary}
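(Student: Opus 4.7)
The plan is a direct reduction to Theorem \ref{genasym} in the special case $G_1 = G_2 = G$ and $N_1 = N_2 = N$. Write $n = \text{rk}\,G/N$ and $k = \text{rk}\,G/H$; since $N \leq H$ one has $k \leq n$, while the hypothesis forces $2k \geq n$, hence $m := n - k \leq k$. I take $K_1 := H$, which is finitely generated of co-rank $k$ and contains $N$, and aim to exhibit a second finitely generated $K_2$ with $N \leq K_2 \leq G$ of co-rank exactly $m = n - k$ in $G$. Once this is done the numerical condition $k + m = n = \text{rk}\,G/N$ of Theorem \ref{genasym} is satisfied, and that theorem supplies a finitely presented normal fibre product $H_\mu \leq G \times G$ with $H_\mu \cap G = N$ in each coordinate. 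By Definition \ref{deftwisted} this $H_\mu$ is precisely a finitely presented twisted $N$-fibre product, which is what we want.

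To construct $K_2$ I would enlarge $H$ inside $G$. The quotient $G/H$ is finitely generated abelian (since $G$ is finitely generated) of torsion-free rank $k$, and the inequality $k \geq n - k$ is exactly what allows one to choose a subgroup $A \leq G/H$ whose quotient $(G/H)/A$ has torsion-free rank $n - k$; concretely, one can split off a direct summand of rank $2k - n$ from the torsion-free part of $G/H$ and let $A$ be that summand together with the full torsion. Pulling $A$ back along the projection $G \twoheadrightarrow G/H$ produces the desired $K_2$: it contains $H$ (hence $N$), has co-rank $n - k$ in $G$, and is finitely generated as an extension of the finitely generated group $H$ by the finitely generated abelian group $A$.

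The only real content is this construction of $K_2$; the hard work is packaged inside Theorem \ref{genasym}. I do not foresee any genuine obstacle, but the step to keep in mind is that the assumption $2\,\text{rk}\,G/H \geq \text{rk}\,G/N$ is exactly the bookkeeping needed so that a single finitely generated overgroup $H$ of $N$ can play \emph{both} roles in Theorem \ref{genasym}: itself for the factor of co-rank $k$, and a suitable enlargement of itself for the complementary factor of co-rank $n - k$.
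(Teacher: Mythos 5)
Your reduction is essentially the paper's own route: Corollary \ref{doublerank} is stated there without a separate proof as the special case $G_1=G_2=G$, $N_1=N_2=N$ of Theorem \ref{genasym}, where the second finitely generated subgroup of co-rank $n-k$ is obtained exactly as you do, by pulling back a suitable subgroup of the finitely generated abelian quotient $G/H$ (the same overgroup trick used in the proof of Corollary \ref{linear}). Note also that you correctly read the hypothesis as $2\,\mathrm{rk}\,G/H\geq \mathrm{rk}\,G/N$, which is the form in which the corollary is actually applied in Theorem \ref{rank2}, Theorem \ref{greatsph} and Example \ref{1relatorposword}.
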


\begin{example}\label{1relatorposword} Let $G$ be a 2-generated 1-relator group with relator of the form $u=v$ where $u$, $v$ are positive words on the generator so that the sum of the exponents of each of the generators in $u$ equals that in $v$.  Then there is some finitely presented twisted $G'$-fibre product in $G\times G$. To see it, note that $\text{rk}G/G'=2$ and recall that by \cite{Baumslag} Theorem 1, $G$ has a normal subgroup $N$ which is finitely generated free and such that $G/N$ is cyclic so we only have to apply Corollary \ref{doublerank}.  
\end{example}

\section{Some partial converses}

In this section we are going to assume that $G=G_1=G_2$ and consider situations when the converse of Corollary \ref{doublerank} holds true.

\begin{theorem}\label{rank2} Let $G$ be finitely presented and $G'\leq N\leq G$ of co-rank 2. Then there is a finitely presented normal $N$-fibre product  in $G\times G$ if and only if
$G=<t>\ltimes K$ where $K$ is finitely generated and $N\leq K$.
\end{theorem}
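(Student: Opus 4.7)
My plan is to translate the problem into the geometric setup of Notation \ref{coordinates} with $n=2$: write $\Sigma=\Sigma^1(G)\cap S(G,N)$, viewed as an open subset of $S^1\subseteq\E^2$, with closed complement $C:=S^1\smallsetminus\Sigma$. By Theorem \ref{twistedfinpres}, the existence of a finitely presented normal $N$-fibre product $H_\mu$ in $G\times G$ is equivalent to the existence of some $\varphi\in\GL_2(\Z)$ (coming from $\mu^*$) whose induced self-homeomorphism $\bar\varphi$ of $S^1$ satisfies $\bar\varphi(C)\subseteq -\Sigma$, i.e.\ $\bar\varphi(C)\cap(-C)=\emptyset$. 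Dually, a splitting $G=\langle t\rangle\ltimes K$ with $N\le K$ finitely generated corresponds, via Theorem \ref{bierirenz} and Lemma \ref{rational}, to the existence of an antipodal pair in $\Sigma$, i.e.\ to $\Sigma\cap(-\Sigma)\ne\emptyset$; the passage from such a pair back to an actual splitting uses that $\Z$ is free, so any epimorphism $G\to\Z$ with finitely generated kernel containing $N$ yields a split extension.

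For the \emph{if} direction I would simply apply Theorem \ref{genasym} with $G_1=G_2=G$, $N_1=N_2=N$ and $K_1=K_2=K$. The character $\chi\colon G\to\Z$ with $\chi(K)=0$ and $\chi(t)=1$ has $K=\ker\chi$ finitely generated, so by Theorem \ref{bierirenz} both $[\chi],[-\chi]$ lie in $\Sigma^1(G)\cap S(G,N)$, ensuring the hypotheses of Theorem \ref{genasym}. Since $m=k=1$ and $m+k=2=n$, this theorem delivers the required finitely presented normal fibre product.

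The content is the \emph{only if} direction. If $C=\emptyset$ then $N$ itself is finitely generated (Theorem \ref{bierirenz}), so any co-rank $1$ overgroup of $N$ works after the torsion adjustment described below. Otherwise, I argue by contradiction: suppose $\Sigma\cap(-\Sigma)=\emptyset$, equivalently $C\cup(-C)=S^1$. Since $S^1$ is connected and both $C$ and $-C$ are nonempty closed sets, they cannot be disjoint, so there is some $v_0\in C\cap(-C)$; hence $\{v_0,-v_0\}\subseteq C$ is an antipodal pair. Linearity of $\varphi$ yields $\bar\varphi(-v_0)=-\bar\varphi(v_0)$, so $\bar\varphi(C)$ also contains the antipodal pair $\{\bar\varphi(v_0),-\bar\varphi(v_0)\}$. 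But the inclusion $\bar\varphi(C)\subseteq -\Sigma$ would then force $-\Sigma$, and hence $\Sigma$, to contain an antipodal pair, contradicting $\Sigma\cap(-\Sigma)=\emptyset$.

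Once $\Sigma\cap(-\Sigma)\ne\emptyset$, Lemma \ref{rational} produces a finitely generated $N\le K<G$ of co-rank $1$; enlarging $K$ to the preimage of the torsion subgroup of $G/K$ retains finite generation while making $G/K\cong\Z$, and freeness of $\Z$ then gives the required splitting $G=\langle t\rangle\ltimes K$. The main obstacle is precisely the topological step that $C\cup(-C)=S^1$ forces $C\cap(-C)\ne\emptyset$ on the connected $1$-sphere; this is clean in dimension $1$ but fails in higher dimensions, which is exactly why the theorem is restricted to co-rank $2$ and why larger co-ranks require the more delicate arguments (such as Theorem \ref{greatsph}) referred to in the introduction.
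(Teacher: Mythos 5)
Your proof is correct and follows essentially the same route as the paper: both directions run through Theorem \ref{twistedfinpres}, Lemma \ref{rational} and Theorem \ref{genasym} (the paper invokes Corollary \ref{doublerank}), with the only-if direction reduced to producing an antipodal pair in $\Sigma^1(G)\cap S(G,N)$. The one divergence is minor: where the paper splits on whether $\Sigma^1(G)^c\cap S(G,N)$ is 2-tame and recalls the boundary/openness argument of \cite{BieriNeumannStrebel} Theorem D, you assume $\Sigma^1(G)\cap S(G,N)$ is 2-tame and use connectedness of $S^1$ to find an antipodal pair in the closed complement before applying $\mu^*$ --- note, though, that this connectedness step works on $S^{n-1}$ for every $n\geq 2$, so the restriction to co-rank $2$ comes from the \emph{if} direction (needing $1+1=n$ in Theorem \ref{genasym}), not from this topological step as your closing remark suggests.
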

\begin{proof} For the \lq\lq if" direction it suffices to use Corollary \ref{doublerank}. In fact, note that
the condition $G=<t>\ltimes K$ with $K$ finitely generated is equivalent to the existence of a discrete character $\chi:G\to\R$ so that $[\chi],[-\chi]\in\Sigma^1(G)^c$ (use Lemma \ref{rational} or \cite{Strebelnotes} Corollary A4.3).

Assume that there  is a finitely presented $N$-fibre product $H\leq G\times G$. By Theorem \ref{twistedfinpres}, then there is some $\mu:G/N\to G/N$ such that
$$[\mu^*(\Sigma^1(G)^c)]\cap S(G,N)\subseteq-\Sigma^1(G)\cap S(G,N).$$
 In the case when $\Sigma^1(G)^c\cap S(G,N)$ is not 2-tame, there are $[\nu],[-\nu]\in\Sigma^1(G)^c\cap S(G,N)$ thus for $\chi:=\varphi(\nu)$, $[\chi],[-\chi]\in\Sigma^1(G)\cap S(G,N)$. So we are left with the case when $\Sigma^1(G)^c\cap S(G,N)$ is 2-tame. The argument of \cite{BieriNeumannStrebel} Theorem D, which we recall now finishes the proof. Take $[\nu]$ in the boundary of $\Sigma^1(G)^c\cap S(G,N)$, then $[-\nu]\in\Sigma^1(G)$ and by openness there is some $[\chi]$ close to $[\nu]$ so that $[\chi],[-\chi]\in\Sigma^1(G)\cap S(G,N)$.
\end{proof}

\begin{theorem}\label{greatsph} Let $G$ be a group with a finite family $\Lambda$ of subgroups $G'\leq K\leq G$ such that 
$$\Sigma^1(G)^c=\bigcup_{K\in\Lambda} S(G,K).$$
There is a finitely presented $G'$-fibre product in $G\times G$ if and only if  $\text{rk}G/G'\leq 2\text{rk}K/K'$ for any $K\in\Lambda$.
\end{theorem}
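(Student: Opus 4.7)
The plan is to translate finite presentability into linear algebra through Notation \ref{coordinates} and exploit the great-subsphere structure. Take $N=G'$ and set $n:=\text{rk}(G/G')$, so $S(G)=S(G,G')\cong S^{n-1}$. Each $K\in\Lambda$ gives a rationally defined subspace $V_K\leq\E^n$ of dimension $\text{rk}(G/K)=n-\text{rk}(K/G')$ with $S(G,K)=[V_K\setminus\{0\}]$, and the hypothesis becomes $\Sigma^1(G)^c=\bigcup_{K\in\Lambda}[V_K\setminus\{0\}]$. In particular $\Sigma^1(G)^c$ is antipodally symmetric, so $-\Sigma^1(G)=\Sigma^1(G)$. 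By Corollary \ref{untwistedfinpres}, a finitely presented $G'$-fibre product exists if and only if there is a lattice-preserving linear isomorphism $\varphi:\E^n\to\E^n$ with $[\varphi(\Sigma^1(G)^c)]\subseteq\Sigma^1(G)$; equivalently, $\varphi(V_K)\cap V_{K'}=\{0\}$ for all $K,K'\in\Lambda$.

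For necessity, setting $K=K'$ forces $\dim\varphi(V_K)+\dim V_K\leq n$, whence $\text{rk}(G/K)\leq n/2$, which rearranges to $n\leq 2\,\text{rk}(K/G')\leq 2\,\text{rk}(K/K')$.

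For sufficiency, the rank condition yields $\dim V_K\leq n/2$ for every $K\in\Lambda$. Put $m:=n-\min_{K\in\Lambda}\dim V_K\geq n/2$ and $k:=n-m\leq m$, so $k+m=n$. Since $m+\dim V_K\leq n$ for all $K$, the locus in the Grassmannian $\text{Gr}(m,n)$ of $m$-planes meeting every $V_K$ only at $\{0\}$ is Zariski-open and nonempty, so its $\Q$-points yield a rationally defined $m$-dimensional subspace $W\leq\E^n$ with $W\cap V_K=\{0\}$ for each $K$. Choose any rational $k$-dimensional $U\subseteq W$. By the elementary divisor theorem there is $\psi\in GL_n(\Z)$ sending $W\mapsto\R e_1+\ldots+\R e_m$ and $U\mapsto\R e_1+\ldots+\R e_k$; in the new coordinates $[\psi(U)]$ and $[\psi(W)]$ lie in $\psi(\Sigma^1(G))$, so Proposition \ref{tech} with $\Sigma_1=\Sigma_2:=\psi(\Sigma^1(G))$ produces a lattice-preserving $\varphi_0$ with $[\varphi_0(\psi(\Sigma^1(G))^c)]\subseteq\psi(\Sigma^1(G))$, and $\varphi:=\psi^{-1}\varphi_0\psi\in GL_n(\Z)$ does the job.

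The main technical point is to transport the rational flag $U\subseteq W$ simultaneously to a standard coordinate flag by an element of $GL_n(\Z)$, which is handled by the elementary divisor theorem; the rest is a dimension count on the Grassmannian together with the Sigma-theoretic translation already in place.
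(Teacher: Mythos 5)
Your overall strategy matches the paper's: through Sigma theory the question becomes the existence of a lattice-preserving $\varphi:\E^n\to\E^n$ with $\varphi(V_K)\cap V_{K'}=\{0\}$ for all $K,K'\in\Lambda$ (the criterion you want here is Theorem \ref{twistedfinpres} read through Notation \ref{coordinates}, with the torsion of $G/G'$ handled by the lifting argument in Theorem \ref{genasym}, rather than Corollary \ref{untwistedfinpres}, which concerns only the untwisted product); necessity is then the same dimension count as in the paper, and yours is in fact slightly more careful, since you only use $\text{rk}\,K/G'\leq\text{rk}\,K/K'$. For sufficiency you also follow the paper's idea --- find a subspace of dimension at least $n/2$ meeting every $V_K$ trivially and feed it into Proposition \ref{tech} --- but you package it differently: the paper converts the subspace into a finitely generated subgroup via Lemma \ref{rational} and quotes Corollary \ref{doublerank} (i.e.\ Theorem \ref{genasym}), whereas you stay linear throughout, getting rationality from Zariski-density on the Grassmannian and normalizing the rational flag $U\subseteq W$ to the coordinate flag by a stacked-basis element of $GL_n(\Z)$ before applying Proposition \ref{tech} directly and conjugating back. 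That route is sound and essentially re-proves Corollary \ref{doublerank} inline; note that, exactly like the paper's proof, you tacitly identify $\text{rk}\,K/K'$ with the rank of the image of $K$ in $G/G'$ when deducing $\dim V_K\leq n/2$ from the hypothesis.

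There is one concrete slip in the sufficiency argument: you set $m:=n-\min_{K\in\Lambda}\dim V_K$, but then the assertion ``$m+\dim V_K\leq n$ for all $K$'' fails as soon as the $V_K$ do not all have the same dimension, and for that value of $m$ the locus of $m$-planes meeting every $V_K$ only in $\{0\}$ is actually empty (no $m$-plane can avoid a $V_K$ with $\dim V_K>n-m$). You need $m:=n-\max_{K\in\Lambda}\dim V_K$, which is precisely the paper's ``subspace of biggest possible dimension with zero intersection with the span of each $S(G,K)$''. The rank hypothesis still gives $m\geq n/2\geq k:=n-m=\max_K\dim V_K$, so the inequality $k\leq m$ required by Proposition \ref{tech} survives, and with this correction the rest of your argument goes through unchanged.
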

\begin{proof} Assume first that the condition on the ranks of the elements in $\Lambda$ holds and let $n=\text{rk}G/G'$. Let $V$ be a subspace of $\E^n$ of biggest possible dimension with zero intersection with  the vector space generated by each of the sub spheres $S(G,K)$ (such a subspace exists, see for example \cite{Strebelnotes} Lemma A4.9). 
Then $[V]\cap S(G,K)=0$ for any $K\in\Lambda$ thus $[V]\subseteq\Sigma^1(G)$ and for some $K\in\Lambda$, $\text{dim}_\R V+\text{rk}G/K=n$ which implies that $n\leq 2\text{dim}_\R V$ so from Corollary \ref{doublerank} we deduce that there is some finitely presented $G'$-fibre product in $G\times G$.

Conversely, if there is such a fibre product then by Corollary \ref{untwistedfinpres} we see that there is some $\varphi:\E^n\to\E^n$ with $[\varphi(S(G,K))]\cap S(G,K)=\emptyset$ for any $K\in\Lambda$. This implies that $2n-2\text{rk}K/K'=2\text{rk}G/K\leq n$.
\end{proof}

\begin{example} The hypothesis of Theorem \ref{greatsph} hold for pure symmetric automorphisms of finitely generated free groups (\cite{Orlandi}) and  fundamental groups of compact K\"ahler manifolds (\cite{Delzant}). It is also the case for right-angled Artin groups. For this last family we are going to be more explicit: let $\Delta$ be a flag complex and $G_\Delta$ the associated right angled Artin group. If $S\subseteq V(\Delta)$ is a subset of vertices we denote by $\Delta_S$ the smallest subcomplex of $\Delta$ containing $S$ and set $G_S:=G_{\Delta_S}$
 seen as a subgroup of $G$.
 By \cite{Strebelnotes} Proposition A4.14 (due to Meier and VanWyk) if $G$ is not abelian,
\begin{equation}\label{raag}\Sigma^1(G)^c=\bigcup_{S\in\mathcal{S}}S(G,G_S)\end{equation}
where $\mathcal{S}$ is the set of subsets $S\subseteq V(\Delta)$ such that the subcomplex of $\Delta$ obtained by removing the vertices in $S$ is disconnected and $S$ is minimal with respect to that property.

\end{example}

So we get:

\begin{corollary}\label{artin} Let $G:=G_\Delta$ be a right-angled-Artin group. Then
\begin{itemize}
\item[i)] The untwisted $G'$-fibre product of $G\times G$ is finitely presented if and only if $G$ is abelian.

\item[ii)] Let $n=|V(\Delta)|$. Then there is some twisted finitely presented $G'$-fibre product in $G\times G$ if and only if $n\leq 2|S|$ for any $S\in\mathcal{S}$.
\end{itemize}
\end{corollary}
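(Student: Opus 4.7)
The plan is to feed the Meier--VanWyk formula (\ref{raag}) into Corollary \ref{untwistedfinpres} for part (i) and into Theorem \ref{greatsph} for part (ii). Both parts hinge on two elementary observations about $G = G_\Delta$: every subsphere $S(G, G_S)$ appearing in (\ref{raag}) is antipodally symmetric, and the abelianization $G/G'$ is free abelian of rank $|V(\Delta)|$ (and likewise $G_S/G_S' \cong \Z^{|S|}$ because $G_S = G_{\Delta_S}$ is the right-angled Artin group on its generating set $S$).

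For (i), I would first reduce to a condition on $\Sigma^1(G)^c$ alone: since every character of $G$ vanishes on $G'$, the subsphere $S(G,G')$ is all of $S(G)$, and Corollary \ref{untwistedfinpres} says the untwisted fibre product is finitely presented if and only if $\Sigma^1(G)^c$ is 2-tame. The abelian case ($\Delta$ a simplex) is then immediate, because $\Sigma^1(G)^c = \emptyset$ (alternatively, the fibre product is just the diagonal). For the converse, I would argue that whenever $\Delta$ is not a simplex it has two non-adjacent vertices $u, v$, so $V(\Delta) \setminus \{u,v\}$ is a disconnecting subset and hence contains some element of $\mathcal{S}$. Then (\ref{raag}) exhibits $\Sigma^1(G)^c$ as a non-empty union of antipodally symmetric subspheres; such a set coincides with its antipode, so it cannot be 2-tame. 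This antipodal-symmetry step is the only point that really uses anything nontrivial.

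For (ii), I would apply Theorem \ref{greatsph} directly, taking $\Lambda = \{G_S : S \in \mathcal{S}\}$, which is exactly the family supplied by (\ref{raag}). Since $\text{rk}(G/G') = |V(\Delta)| = n$ and $\text{rk}(G_S/G_S') = |S|$, the criterion $\text{rk}(G/G') \leq 2\,\text{rk}(K/K')$ for every $K \in \Lambda$ translates word for word into $n \leq 2|S|$ for every $S \in \mathcal{S}$. All remaining work is bookkeeping, so I do not anticipate any further obstacle.
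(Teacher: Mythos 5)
Your argument is correct and is exactly the derivation the paper intends (the paper states the corollary with no separate proof, as an immediate consequence of Corollary \ref{untwistedfinpres}, Theorem \ref{greatsph} and the Meier--VanWyk formula (\ref{raag})): part (i) via 2-tameness failing for a non-empty union of antipodally symmetric subspheres once $\Delta$ has two non-adjacent vertices, and part (ii) by plugging $\Lambda=\{G_S: S\in\mathcal S\}$ into Theorem \ref{greatsph} with $\mathrm{rk}\,G_S/G_S'=|S|$. The only cosmetic point is that to invoke Theorem \ref{greatsph} literally one should replace $G_S$ by $G_S G'$ (which contains $G'$ and defines the same subsphere $S(G,G_S)$), a harmless adjustment already implicit in the paper's own use of (\ref{raag}).
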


\begin{example} Let $\Delta$ be the graph

 \bigskip

\begin{tikzpicture}[scale=0.9]
\filldraw(1,-2) ;
  \draw[black] (5,-2) -- (7,-2) --(9,-2)--(9,0)--(7,1)--(5,0)--(9,0)--(5,-2)--(5,0)--(7,-2);
   \draw[black] (7,1)--(9,-2); 
  \filldraw (5,0) circle (2pt) ;
   \filldraw (7,1) circle (2pt);
   \filldraw (9,0) circle (2pt);
       \filldraw (5,-2) circle (2pt);
    \filldraw (7,-2) circle (2pt) ;
   \filldraw (9,-2) circle (2pt) ;

   \end{tikzpicture}

\bigskip

\noindent One easily checks there is no set of 2 vertices such that its removal disconnects the graph so the condition of the Theorem is satisfied. However, $V(\Delta)$ can not be split as a disjoint union of two sets of vertices so that each of the vertices in the first set commutes with each of the vertices of the second set. This means that the associated right angled Artin group is not a direct product thus it does not fall into the scope of Example \ref{product}.
\end{example}

\end{document}